\newtheorem{definition}{Definition}
\newtheorem{remark}{Remark}
\newtheorem{theorem}{Theorem}
\newtheorem{lemma}{Lemma}
\newtheorem{proposition}{Proposition}
\newtheorem{corollary}{Corollary}
\newcommand{\RR}{\mathbb{R}}
\newcommand{\Sd}[1][d-1]{\mathbb{S}^{#1}}
\newcommand{\nat}{\mathbb{N}}
\newcommand{\ind}{\boldsymbol{1}}
\newcommand{\E}{\mathbb{E}}
\newcommand{\ed}{\stackrel{d}{=}}
\newcommand{\I}{\mathrm{I}}
\newcommand{\p}{\mathbb{P}}
\newcommand{\eqd}{\stackrel{d}{=}}
\newcommand{\tr}{^\top}
\newcommand{\bb}{{B}}
\journal{Journal of Multivariate Analysis}
\begin{document}

\begin{frontmatter}

\title{Estimating axial symmetry using random projections}

\author[1]{Alejandro Cholaquidis}
\author[1]{Ricardo Fraiman}
\author[3]{Manuel Hernandez-Banadik\corref{mycorrespondingauthor}}
\author[4]{Stanislav Nagy}

\address[1]{Facultad de Ciencias, Universidad de la Rep\'ublica}
 \address[3]{Facultad de Ciencias Económicas y Administración, Universidad de la Rep\'ublica}
\address[4]{Faculty of Mathematics and Physics, Charles University}

\cortext[mycorrespondingauthor]{Corresponding author. Email address: \url{m.hernandez.banadik@gmail.com}}

        \begin{abstract}
            This paper studies the problem of identifying directions of axial symmetry in multivariate distributions. Theoretical results are derived on how the measure or cardinality of the set of symmetry directions relates to spherical symmetry. The problem is framed using random projections, leading to a proof that in \(\RR^2\), agreement on two random projections is enough to identify the true axes of symmetry. A corresponding result for higher dimensions is conjectured. An estimator for the symmetry directions is proposed and proved to be consistent in the plane.
        \end{abstract}

\begin{keyword}
axial symmetry \sep multivariate symmetry \sep random projections \sep Cram\'er-Wold theorem \sep symmetry estimation


\MSC[2020] 62H12 \sep 62G05
\end{keyword}

\end{frontmatter}



     \section{Introduction}

        Symmetry of probability distributions has been a well-studied property. Multivariate distributions can exhibit various types of symmetries, each characterized by invariance under a specific group of actions (see, for instance, \cite{serfling2006multivariate}). Understanding the symmetry of a distribution is valuable in several contexts. 

        In medical image analysis, detecting deviations from axial symmetry is crucial, as any asymmetry could indicate pathological changes in an organ's shape. Several studies have focused on analyzing symmetry in medical images. For example, \citet{martos2018discrimination} explore symmetry analysis in three-dimensional magnetic resonance brain images, while \citet{hogeweg2017fast} propose a measure of axial symmetry to quantify asymmetry between lungs in frontal chest radiographs. The medical software CAD4TB version 3, designed for computer-aided detection of tuberculosis, includes a symmetry check in its routine that involves estimating an axis of symmetry \citep{murphy2020computer}.

        Beyond medical imaging, symmetry plays a significant role in enhancing statistical methods by leveraging invariance constraints, such as in set estimation or density estimation. If the symmetry hypothesis holds, the object to be estimated becomes more constrained in nature, making it easier to estimate. Axial symmetry and related types of symmetry are particularly relevant in directional statistics; for an overview of recent research in this area, refer to \citet{pewsey2021recent} and the references therein.

The goal of this paper is dual. First, we theoretically characterize how the size of the set of axial symmetry directions—in terms of its measure or cardinality—implies the spherical symmetry of the entire distribution or of its marginals.

Second, we frame this question within the setting of random projections, seeking to establish a Cramér-Wold-type device for symmetry. Specifically, we investigate whether the axial symmetry of a multivariate distribution can be determined by examining its one-dimensional projections. The core idea is to determine symmetry by verifying if the one-dimensional projections of a distribution agree with those of its transformed version (e.g., reflected across an axis) over a set of directions.

As we demonstrate, this leads to a challenging technical problem. By an extension of the classical Cram\'er-Wold theorem \citep{Cramer_Wold1936} due to \citet{Cuesta_etal2007}, we know that while two different distributions can project along a direction to the same one-dimensional distribution, the set of such ``bad'' directions that cause this confusion typically has null measure. This fact is sufficient for hypothesis testing, where one can ignore these null sets (see, for instance, \cite{fraiman2017some,fraiman2024application} for tests of central symmetry or invariance under finitely generated group actions involving random projections).

Our scenario is different and more involved. Since our goal is to estimate the symmetry directions themselves, we must search the entire space of directions for those where the projections of the distribution and its transformed version agree. In this context, null sets of directions become critically important and cannot be disregarded. The principal question, therefore, asks how many random projections one needs to consider in order to identify the axes of symmetry of a multivariate distribution.

In the present contribution, we successfully solve this problem in dimension two. We prove that if a distribution and its reflection across an axis produce identical projections onto two independent, randomly chosen directions, then the identified axis is the true axis of symmetry (almost surely) and does not depend on the particular directions sampled.

The generalization of this result to higher dimensions remains an open problem; we conjecture that agreement on \(d\) independent random projections suffices in \(\RR^d\), but a proof is elusive.

Based on these results, we develop a consistent estimator for the directions of symmetry. To the best of our knowledge, this constitutes the first proposed method in the literature for this task.
The estimator is well-defined for arbitrary dimension $d$, and is proved to be consistent in dimension $d=2$. The proof of its consistency for $d>2$ is analogous to the two-dimensional case, requiring only that the aforementioned higher-dimensional conjecture be true.

	\section{Setup and motivation}
	
	Let $X$ be a random vector in $\mathbb{R}^d$ with the usual Euclidean inner product, and let $\Sd = \{ u \in \mathbb{R}^d : \Vert u \Vert = 1 \}$ denote the unit sphere. We denote by $\nu$ the uniform distribution on $\Sd$. Throughout this paper, we always assume that expected value $\mathbb{E}(X)$ of $X$ exists, and is finite.
	
	\begin{definition}
		A random vector $X \in \mathbb{R}^d$ is axially symmetric with respect to $u \in \Sd$ if $X - \mathbb{E}(X) \ed R_u(X - \mathbb{E}(X))$, where \(R_u = 2 u u^\top - \I_d\) is the orthogonal matrix representing reflection across the hyperplane orthogonal to \(u\), \(\overset{d}{=}\) denotes equality in distribution, \(u^\top\) is the transpose of \(u\), and \(\I_d\) is the \(d \times d\) identity matrix.
	\end{definition}
	
	For illustration, let us fix \(u\) to be the first canonical basis vector \(u = (1, 0, \dots, 0)^\top\). Then,
	\begin{equation*}
		R_u = \begin{pmatrix}
			1 & 0_{d-1}^\top \\
			0_{d-1} & - \I_{d-1}
		\end{pmatrix},
	\end{equation*}
	where \(0_{d-1}\) is the zero vector in \(\RR^{d-1}\).
	
	For notational simplicity, assume $\mathbb{E}(X) = 0_{d}$. We define the set 
	\begin{equation} \label{eq: U}
	\mathcal U = \{u \in \Sd: X \ed R_uX\},
	\end{equation} 
    that is the set of axes of symmetry of $X$.
	
	For a univariate random variable $Y$, write $F_Y$ for its distribution function. Given a direction $h \in \Sd$, we consider the projections $\langle X, h\rangle$ and $\langle R_u X, h\rangle$, and  define the function
	\[
	g_h(u) = \sup_{t \in \mathbb{R}} \left| F_{\langle X, h \rangle}(t) - F_{\langle R_u X, h \rangle}(t) \right|,
	\]
	and the set
	\[
	\mathcal{U}^h = \{ u \in \Sd : h^\top X \ed h^\top R_uX \}.
	\]
	It is easy to see that it could also be defined by $\mathcal{U}^h = \{ u \in \Sd : g_h(u) = 0 \}$.
	
	For a set of directions $\mathfrak{H} = \{ h_1, \dots, h_k \} \subset \Sd$ (where $k$ is a positive integer) we define
	\begin{equation} \label{eq: gH}
	g_{\mathfrak{H}}(u) := \frac{1}{k} \sum_{j=1}^k g_{h_j}(u),
	\end{equation}
	and
	\begin{equation} \label{eq: UH}
	\mathcal{U}^{\mathfrak{H}} = \{ u \in \Sd : g_{\mathfrak{H}}(u) = 0 \}.
	\end{equation}

	Assuming that \(X\) is symmetric with respect to \(u\) and that it fulfills a general condition on its moments, we propose an estimator of \(u\) based on a Cram\'er-Wold device that allows us to estimate $u$ by the set $\mathcal U^{\mathfrak H}$, see for example \cite{Cramer_Wold1936, Cuesta_etal2007, fraiman2024application} for references on the Cramér-Wold theorem and generalizations.
	
	The set $\mathcal{U}^{\mathfrak{H}}$ from~\eqref{eq: UH} may contain directions that are not axes of symmetry of $X$. For instance, for any $h \in \Sd$, it is trivial that $g_h(h) = 0$. Additionally, if the distribution of $X$ is centrally symmetric (but not axially symmetric around any axis), then $g_h(v) = 0$ for every $h$ orthogonal to $v$.
	This observation indicates that identifying axes of symmetry from $\mathcal{U}^{\mathfrak{H}}$ is not straightforward, and critical information may be lost when projecting onto random directions. The main theorem of this paper shows that if the set $\mathfrak H$ is properly defined, then $\mathcal U^{\mathfrak H}$ almost surely coincides with $\mathcal U$ and does not depend on $\mathfrak H$.
 	
	\section{Theoretical results}  \label{sec: theory}

\subsection{On symmetry under Carleman's condition}
    In what follows, we need a moment–determinacy guarantee for one-di\-men\-sion\-al projections. A classical sufficient condition is Carleman’s condition (see \cite{Carleman1922}), which ensures that a distribution is uniquely determined by its moments. The Carleman condition is sufficient but not necessary: there are well–known laws with all moments finite that are nonetheless moment–indeterminate (e.g., lognormal–type families and relatives). In our setting, we assume a Carleman condition on $\|X\|$; by Lyapunov’s inequality (see \cite{Billingsley1995ProbabilityMeasure}), $(\E|h^\top X|^{2k}\bigr)^{1/(2k)}\leq \bigl(\E\|X\|^{2k}\bigr)^{1/(2k)},$ the even–moment Carleman series for each projection $h^\top X$ is dominated by that of $\|X\|$, hence each projection is moment–determinate. Combined with the Cram\'er--Wold device \citep{Cramer_Wold1936}, this lets us pass from many identical one–dimensional projections to strong symmetry conclusions.

\begin{definition} 
Let $X \in \RR^d$ be a random vector with moments of all orders
\[
m_k \;=\; \mathbb{E}\big[ \left\Vert X \right\Vert^k \big], \qquad k \in \mathbb{N}.
\]
We say that $X$ satisfies the Carleman condition if
\[
\sum_{k=1}^{\infty} m_{k}^{-\frac{1}{k}} \;=\; \infty.
\]

\end{definition}

Recall that a random vector $X \in \RR^d$ is called spherically symmetric if for each orthogonal matrix $A \in \RR^{d \times d}$ we have $A X \eqd X$. Then we also say that the distribution $P$ of $X$ is spherical (or rotation-invariant).
        
\begin{proposition} \label{proposition: spherical symmetry}
Let $X\in \RR^2$ be a random vector that fulfills the Carleman condition. 
Assume there exists a real random variable $T$ such that for infinitely many unit directions
$h\in\mathbb{S}^1$ one has
$h^{\top}X \stackrel{d}{=} T$.
Then $X$ is spherically symmetric.  
In particular, $h^{\top}X\stackrel{d}{=}T$ for every $h\in\mathbb{S}^1$, and $T$ is symmetric about~$0_2$.
 \end{proposition}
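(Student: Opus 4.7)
\emph{Proof proposal.} The strategy is to upgrade the hypothesis from agreement of the projections on infinitely many directions to agreement on every direction, and then to read off spherical symmetry and the symmetry of $T$ by two standard moves.

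Write $h(\theta)=(\cos\theta,\sin\theta)^{\top}$ and $X=(X_1,X_2)^{\top}$. The Carleman condition on $\|X\|$ guarantees that all joint moments of $(X_1,X_2)$ are finite, so by binomial expansion
\[
m_k(\theta)\;:=\;\mathbb{E}\bigl[(h(\theta)^{\top} X)^k\bigr]\;=\;\sum_{j=0}^{k}\binom{k}{j}\mathbb{E}[X_1^{\,j}X_2^{\,k-j}]\cos^{j}\theta\,\sin^{k-j}\theta
\]
is, as a function of $\theta$, a trigonometric polynomial of degree at most $k$. Let $\mathcal H\subset\mathbb{S}^1$ be the infinite set of directions on which $h^{\top}X\eqd T$. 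Along $\mathcal H$ each $m_k$ takes the constant value $\mathbb{E}[T^k]$. Since a non-zero trigonometric polynomial of degree at most $k$ has at most $2k$ zeros in $[0,2\pi)$, the function $m_k(\theta)-\mathbb{E}[T^k]$ must vanish identically. Hence $m_k(\theta)=\mathbb{E}[T^k]$ for every $k\in\nat$ and every $\theta\in[0,2\pi)$.

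By the Lyapunov/Carleman argument recalled immediately before the proposition, every projection $h^{\top}X$ is moment-determinate, so equality of all moments upgrades to $h^{\top}X\eqd T$ for \emph{every} $h\in\mathbb{S}^1$. Applying this to $h$ and to $-h$ gives $T\eqd (-h)^{\top}X=-h^{\top}X\eqd -T$, so $T$ is symmetric about $0$. For any orthogonal matrix $A\in\RR^{2\times 2}$ and any $h\in\mathbb{S}^1$,
\[
h^{\top}(AX)\;=\;(A^{\top}h)^{\top}X\;\eqd\;T\;\eqd\;h^{\top}X,
\]
so by the classical Cram\'er--Wold theorem $AX\eqd X$, i.e., $X$ is spherically symmetric.

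The main obstacle I anticipate is precisely the promotion step. The set $\mathcal H$ may be countable and have no accumulation point on $\mathbb{S}^1$, so no continuity or analytic-continuation argument can be applied to the distribution functions directly. What makes the promotion possible is the purely algebraic observation that the moments of one-dimensional projections depend on the direction as \emph{polynomials} of bounded degree, which rigidifies the entire family of projections from very sparse coincidences. This is a $d=2$ trick, since directions are parameterized by a single angle; the analogous step in $\RR^d$ would require controlling zero sets of spherical harmonics on $\mathbb{S}^{d-1}$ from an arbitrary infinite subset, a considerably more delicate matter.
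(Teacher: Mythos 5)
Your proof is correct and follows essentially the same route as the paper's: the trigonometric-polynomial rigidity argument to promote the moment identities from infinitely many directions to all of $\mathbb{S}^1$, the Lyapunov--Carleman step to upgrade moment equality to distributional equality of projections, and Cram\'er--Wold to conclude rotation invariance. The only (welcome) difference is that you also spell out the quick $h \mapsto -h$ argument for the symmetry of $T$, which the paper states but leaves implicit.
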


\begin{proof} For each $k\in\mathbb{N}$ define $P_k(h)\;=\;\mathbb{E}\!\big[(h^{\top}X)^{k}\big]$, $\|h\|=1.$ 
Clearly $P_k$ is a homogeneous polynomial of degree $k$ in the coordinates of $h$; when restricted to
$\mathbb{S}^1$ it is a trigonometric polynomial (hence real-analytic in the angle).
By hypothesis, there is an infinite set $H\subset\mathbb{S}^1$ such that
$P_k(h)=m_k:=\mathbb{E}[T^{k}]$ for every $h\in H$.
Because a non-constant trigonometric polynomial can take a fixed value only at finitely many angles,
it follows that $P_k(h)\equiv m_k$ for all $h\in\mathbb{S}^1$.

Moreover, since $P_k(h)$ is constant and equal to $m_k$ for every $h$ and every $k$, the
 one-dimensional moments  of $h^{\top}X$ coincide with those of $T$ for all $h$.
Carleman’s condition implies moment determinacy in one dimension:
For any unit vector $h$ we have  
\[
m_{2k}(h) := \mathbb{E}\!\left[\,|h^\top X|^{2k}\,\right]
\;\le\; \mathbb{E}\!\left[\|X\|^{2k}\right] =:  M_{2k}
\quad\Longrightarrow\quad
m_{2k}(h)^{-\frac{1}{2k}} \ge M_{2k}^{-\frac{1}{2k}}.
\]
That is, each projection $h^\top X$ satisfies Carleman’s condition, so its moment sequence determines its law.
We have proved that 
\[
h^{\top}X \stackrel{d}{=} T \quad\text{for every } h\in\mathbb{S}^1.
\]

Now let $Q$ be a rotation (that is, a $2 \times 2$ orthogonal matrix) in $\mathbb{R}^2$.  For every $h\in\mathbb{S}^1$,
\[
h^{\top}(QX)\;=\;(Q^{\top}h)^{\top}X \stackrel{d}{=} T.
\]
Thus, every one-dimensional projection of $QX$ has the same distribution as that of $X$.
By the Cramér–Wold theorem we conclude $QX\stackrel{d}{=}X$ for every rotation $Q$;
hence the law of $X$ is rotation-invariant, i.e.\ $X$ is spherically symmetric.
\end{proof}

It is interesting to observe that without the Carleman condition, the claim of Proposition~\ref{proposition: spherical symmetry} is no longer true. For an example, see \cite[Example~24]{Ranosova2023}.

\begin{corollary}\label{cor:infinite_directions}
Let  $P$ be a distribution on $\mathbb{R}^2$ that fulfills the Carleman condition. Assume that there are infinitely many directions of symmetry in the set $\mathcal{U}$ from~\eqref{eq: U}, then the distribution of $P$ is spherical.
\end{corollary}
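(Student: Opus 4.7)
The plan is to reduce directly to Proposition~\ref{proposition: spherical symmetry}: starting from the hypothesis that $\mathcal{U}$ is infinite, I will produce infinitely many unit directions on which the one-dimensional projections of $X \sim P$ all share a common law, and then invoke that proposition. Following the convention of Section~2, I assume $\E X = 0_2$, so that $u \in \mathcal{U}$ means $X \eqd R_u X$.

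Fix an arbitrary auxiliary direction $h_0 \in \mathbb{S}^1$ and set $T := h_0^\top X$. For every $u \in \mathcal{U}$, applying the measurable map $y \mapsto h_0^\top y$ to both sides of $X \eqd R_u X$ and using that $R_u$ is symmetric will give
\[
T \;\eqd\; h_0^\top R_u X \;=\; (R_u h_0)^\top X.
\]
Hence every direction in $H := \{\, R_u h_0 : u \in \mathcal{U}\,\} \subset \mathbb{S}^1$ is one along which $X$ projects to a random variable with the law of $T$.

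It remains to check that $H$ is infinite. Parameterising $u = (\cos\theta,\sin\theta)^\top$ and $h_0 = (\cos\phi,\sin\phi)^\top$ in $\RR^2$, a short computation gives $R_u h_0 = (\cos(2\theta - \phi),\sin(2\theta - \phi))^\top$. Since $R_u = R_{-u}$, the set $\mathcal{U}$ is naturally parameterised by $\theta$ modulo $\pi$, and the map $[\theta] \mapsto 2\theta - \phi \bmod 2\pi$ is injective; thus an infinite $\mathcal{U}$ yields an infinite $H$. Applying Proposition~\ref{proposition: spherical symmetry} to $X$ (its Carleman hypothesis is exactly the one assumed on $P$) then delivers spherical symmetry of $X$, i.e.\ of $P$. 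I do not expect a real obstacle here: the substantive content is already packaged in Proposition~\ref{proposition: spherical symmetry}, and what remains is only the short two-dimensional counting step above, which is essentially a one-line observation about reflections in the plane.
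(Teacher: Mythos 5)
Your proof is correct and follows essentially the same route as the paper's: both fix an auxiliary direction, observe that every $u \in \mathcal{U}$ produces a direction $R_u h_0$ along which the projection of $X$ has the same law, and invoke Proposition~\ref{proposition: spherical symmetry}. The paper's own proof is a one-sentence remark that omits the verification that these directions are genuinely infinite in number; your two-to-one counting step (via $R_u = R_{-u}$ and the injectivity of $[\theta] \mapsto 2\theta - \phi \bmod 2\pi$) makes that implicit point explicit, which is a worthwhile addition rather than a departure.
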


\begin{proof}
    Fixing a direction $v \in \mathbb{S}^1$, by the symmetry hypothesis, there are infinitely many directions in which the projected distributions are the same, so Corollary~\ref{cor:infinite_directions} is a straightforward application of Proposition~\ref{proposition: spherical symmetry}.
\end{proof}

    \begin{remark}[Counterexample to Proposition~\ref{proposition: spherical symmetry} in \(\RR^3\)]
	Consider \(X\) to be a Gaussian vector with mean vector \((0, 0, 0)^\top\) and a diagonal covariance matrix with diagonal entries \((1+\epsilon, 1, 1)\) for some \(\epsilon > 0\). It is straightforward to see that every direction in the plane defined by \(x_1 = 0\) (i.e., the plane orthogonal to the \(x_1\)-axis) is a direction of symmetry, despite the fact that the distribution of \(X\) is not spherical.
    \end{remark}

 Finite symmetry groups (e.g., dihedral symmetries in the plane) constrain the law along finitely many axes but do not imply spherical symmetry. In contrast, a positive surface measure of $\mathcal U$ provides a density point $x_0$ and therefore, by Lemma~\ref{lem:composition_symmetries} below, a mechanism to propagate symmetry locally along short geodesic arcs. This “percolation” converts local abundance into global invariance. The proof of the following proposition formalizes this propagation. Recall that by $\nu$ we denote the uniform distribution on $\Sd$.

	\begin{proposition}\label{prop:positive_measure_spherical}
	Let $X \in \RR^d$ be a random vector. If the set of directions of symmetry $\mathcal{U}$ of $X$ has positive $\nu$-measure, then the distribution of $X$ is spherically symmetric.
	\end{proposition}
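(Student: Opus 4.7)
My plan is to analyse the full stabilizer subgroup
\[
G \;:=\; \{Q \in O(d) : QX \ed X\},
\]
which, being closed in the compact Lie group $O(d)$, is itself a compact Lie subgroup; write $\mathfrak{g} \subset \mathfrak{so}(d)$ for its Lie algebra. By hypothesis $R_u \in G$ for every $u \in \mathcal{U}$, and spherical symmetry of $X$ is equivalent to $SO(d) \subseteq G$. It therefore suffices to prove $\mathfrak{g} = \mathfrak{so}(d)$, as this forces the identity component $G^0$ to equal $SO(d)$.

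The first step is to apply the Lebesgue density theorem on the sphere $\Sd$: since $\nu(\mathcal{U}) > 0$, there exists a density point $u_0 \in \mathcal{U}$. A standard chart-based argument shows that at a density point the \emph{tangent cone} of $\mathcal{U}$ fills the entire tangent space $u_0^\perp$: for every unit vector $w \in u_0^\perp$ there is a sequence $u_n \in \mathcal{U}$, $u_n \to u_0$, with $(u_n - u_0)/\|u_n - u_0\| \to w$. This is the local abundance alluded to in the remarks preceding the statement, and is the only place where the positive-measure hypothesis is really used.

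Next, using Lemma~\ref{lem:composition_symmetries} to conclude $R_{u_0} R_{u_n} \in G$, I would linearise near the identity. Writing $u_n = \cos\theta_n\,u_0 + \sin\theta_n\,w_n$ with $w_n \to w$ and $\theta_n \to 0$, a short expansion yields
\[
R_{u_0} R_{u_n} \;=\; \I_d + 2\theta_n\bigl(u_0 w_n\tr - w_n u_0\tr\bigr) + O(\theta_n^2).
\]
Since $G$ is a closed Lie subgroup, the normalised limit $A_w := u_0 w\tr - w u_0\tr$ lies in $\mathfrak{g}$ for every $w \in u_0^\perp$, contributing a $(d-1)$-dimensional subspace.

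The argument closes with a Lie-bracket computation: using $u_0\tr w_i = 0$ one obtains $[A_{w_1}, A_{w_2}] = w_2 w_1\tr - w_1 w_2\tr$, and these span $\mathfrak{so}(u_0^\perp) \cong \mathfrak{so}(d-1)$ as $w_1, w_2$ range over $u_0^\perp$. Since each $A_w$ has nonzero entries only in the $u_0$-row and column, whereas $\mathfrak{so}(u_0^\perp)$ is supported in the complementary block, the two subspaces are in direct sum with total dimension $(d-1) + (d-1)(d-2)/2 = d(d-1)/2 = \dim\mathfrak{so}(d)$. Hence $\mathfrak{g} = \mathfrak{so}(d)$ and we are done. \textbf{The main obstacle} is the tangent-cone step, which converts the (weak) Lebesgue-density property on $\Sd$ into the (stronger) directional approach condition needed to extract each $A_w$; once that is in place, the rest is a mechanical Lie-algebra computation.
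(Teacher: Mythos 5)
Your proof is correct, but it takes a genuinely different route from the paper. The paper works entirely with the set $\mathcal{U} \subset \Sd$: it first shows $\mathcal{U}$ is closed (Lemma~\ref{lemma: closed}), then picks a Lebesgue density point $x_0 \in \mathcal{U}$, and — assuming for contradiction that some ball $\bb(y,\epsilon)$ misses $\mathcal{U}$ — finds a nearby $x_1 \in \mathcal{U}$ on a geodesic toward $y$ and iterates the reflections $x_{\ell+1} = R_{x_\ell}(x_{\ell-1})$, using Lemma~\ref{lem:composition_symmetries} to keep each $x_\ell$ in $\mathcal{U}$. Because the geodesic step lengths stay bounded by $\epsilon/4$, the sequence must eventually land in $\bb(y,\epsilon)$, forcing $\mathcal{U}$ to be dense and hence equal to $\Sd$. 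You instead lift the problem to the stabilizer subgroup $G \subset O(d)$ and work infinitesimally: the density point gives a full tangent cone, the linearization $R_{u_0}R_{u_n} = \I_d + 2\theta_n(u_0 w_n^\top - w_n u_0^\top) + O(\theta_n^2)$ hands you the rank-two skew matrices $A_w$ in $\mathfrak{g}$ (via Cartan's closed-subgroup theorem), and the bracket $[A_{w_1},A_{w_2}] = w_2 w_1^\top - w_1 w_2^\top$ fills in $\mathfrak{so}(u_0^\perp)$, giving $\mathfrak{g} = \mathfrak{so}(d)$. Both arguments are valid; the paper's is more elementary (pure measure theory and plane geometry on the sphere, no Lie-group machinery), while yours is more structural and also reveals that $G^0 = SO(d)$ rather than merely that $\mathcal{U} = \Sd$. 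Two small points worth tightening: (i) your tangent-cone claim at a density point is correct but should be spelled out — if some cone of directions around $w$ were avoided near $u_0$, the density would be bounded away from $1$; and (ii) to reach the paper's definition of spherical symmetry ($O(d)$-invariance rather than $SO(d)$-invariance), you should add that $\mathcal{U} \neq \emptyset$ supplies at least one $R_u \in G$, so for $d$ even $\det R_u = -1$ gives the missing component directly, and for $d$ odd one invokes the standard fact that $SO(d)$-invariance of a probability law already implies $O(d)$-invariance (the unique $SO(d)$-invariant law on $\Sd$ is uniform). Neither is a gap, just a sentence each.
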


The proof of this proposition is based on the following Lemmas~\ref{lem:composition_symmetries} and~\ref{lemma: closed}. The first one states that the set $\mathcal U$ of axes of symmetry is closed under conjugation.

\begin{lemma}\label{lem:composition_symmetries}
	Let \(u_1, u_2 \in \Sd\). If \(X \in \RR^d\) { is a random vector} symmetric about the directions generated by \(u_1\) and \(u_2\), then it is also symmetric about the direction \(v = R_{u_1} u_2\).
\end{lemma}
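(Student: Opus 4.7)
The plan is to reduce the statement to a short algebraic identity about reflections together with the fact that equality in distribution is preserved under deterministic transformations. Throughout I will work under the normalization $\mathbb{E}(X)=0_d$ adopted in the paper.

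First I would record the conjugation identity
\[
R_{Qu} \;=\; Q\,R_u\,Q^{\top}\qquad\text{for every orthogonal } Q\in\RR^{d\times d} \text{ and } u\in\Sd,
\]
which follows from the explicit formula $R_u=2uu^{\top}-\I_d$ by expanding $(Qu)(Qu)^{\top}=Quu^{\top}Q^{\top}$ and using $QQ^{\top}=\I_d$. Applying this with $Q=R_{u_1}$ and $u=u_2$, and noting that $R_{u_1}$ is symmetric and involutive so that $R_{u_1}^{\top}=R_{u_1}$, I obtain
\[
R_v \;=\; R_{R_{u_1}u_2} \;=\; R_{u_1}\,R_{u_2}\,R_{u_1}.
\]
Observe also that $v=R_{u_1}u_2\in\Sd$ because $R_{u_1}$ is an isometry, so $v$ is a legitimate candidate direction.

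Next I would chain three applications of equality in distribution. By the symmetry of $X$ about $u_1$ we have $R_{u_1}X\eqd X$. Applying the deterministic map $R_{u_2}$ to both sides preserves equality in distribution, so $R_{u_2}R_{u_1}X\eqd R_{u_2}X$, and by symmetry about $u_2$ the right-hand side equals $X$ in distribution. Applying $R_{u_1}$ once more and invoking the symmetry about $u_1$ a final time gives
\[
R_v X \;=\; R_{u_1}R_{u_2}R_{u_1}X \;\eqd\; R_{u_1}X \;\eqd\; X,
\]
which is exactly the statement that $v$ is an axis of symmetry.

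I do not expect any real obstacle: the only subtlety is the bookkeeping with $R_{u_1}^{\top}$ versus $R_{u_1}$ in the conjugation formula, which is resolved immediately by the fact that reflections are symmetric and involutive. If the paper later wishes to drop the mean-zero normalization, the same argument applies verbatim to $X-\mathbb{E}(X)$, since $R_v$ is linear and both $R_{u_1}$ and $R_{u_2}$ fix the origin.
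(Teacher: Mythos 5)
Your proof is correct and follows essentially the same route as the paper: you establish the conjugation identity $R_{R_{u_1}u_2}=R_{u_1}R_{u_2}R_{u_1}$ (using orthogonality, symmetry, and involutivity of $R_{u_1}$) and then chain equalities in distribution by applying deterministic reflections, exactly as in the paper's proof. The only cosmetic difference is that you first state the general identity $R_{Qu}=QR_uQ^{\top}$ for an arbitrary orthogonal $Q$ before specializing, whereas the paper derives the specific case directly.
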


\begin{proof}[Proof of Lemma \ref{lem:composition_symmetries}]
We know that  \(X \stackrel{d}{=} R_{u_1} X\) and \(X \stackrel{d}{=} R_{u_2} X\). Our goal is to prove that \(X \stackrel{d}{=} R_v X\), where \(v = R_{u_1} u_2\). Thanks to the orthogonality of the matrix $R_{u_1}$ we have
	\[
	R_v =  2 (R_{u_1} u_2)(R_{u_1} u_2)^\top - \I_d = R_{u_1} (2 u_2 u_2^\top - \I_d) R_{u_1} = R_{u_1} R_{u_2} R_{u_1}.
	\]
	From \(X \stackrel{d}{=} R_{u_2} X\) and \(X \stackrel{d}{=} R_{u_1} X\), it follows that, $R_{u_1} X \stackrel{d}{=} R_{u_1} R_{u_2}R_{u_1} X $. Therefore, $X \stackrel{d}{=}  R_v   X.$ 
	
\end{proof}

   \begin{lemma}   \label{lemma: closed}
        The set of directions of symmetry $\mathcal{U}$ of a random vector $X \in \RR^d$ is closed.
    \end{lemma}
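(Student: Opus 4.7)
The plan is to argue by sequential closedness: take an arbitrary sequence $(u_n)_{n\in\nat}\subset\mathcal{U}$ with $u_n\to u\in\Sd$ and show $u\in\mathcal U$. The argument rests on the fact that the map $u\mapsto R_u=2uu^\top-\I_d$ is continuous from $\Sd$ into the space of $d\times d$ matrices (it is polynomial in the entries of $u$), so $R_{u_n}\to R_u$ in operator norm. Since each $R_{u_n}$ acts as a deterministic linear transformation on the random vector $X$, this yields $R_{u_n}X\to R_u X$ almost surely, and in particular in distribution.

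Next I would invoke the defining property of the sequence: for each $n$, $u_n\in\mathcal U$ gives $R_{u_n}X\stackrel{d}{=}X$. Since the distributional limit of a sequence of random vectors all having the same law as $X$ must itself have law equal to that of $X$ (characteristic functions are preserved under pointwise limits, or equivalently by uniqueness of weak limits), we conclude $R_u X\stackrel{d}{=}X$, i.e.\ $u\in\mathcal U$. Hence $\mathcal U$ is sequentially closed in $\Sd$, and since $\Sd$ is a metric space this is equivalent to being closed.

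I do not expect any genuine obstacle; the only points to watch are (i) checking that $u\mapsto R_u$ is indeed continuous at every $u\in\Sd$ (immediate from the explicit formula $R_u=2uu^\top-\I_d$), and (ii) justifying the passage from $R_{u_n}X\stackrel{d}{=}X$ for all $n$ together with $R_{u_n}X\stackrel{d}{\longrightarrow}R_uX$ to the conclusion $R_uX\stackrel{d}{=}X$, which is a standard fact about weak convergence. Note also that the argument is dimension-free and uses nothing beyond the definition of $\mathcal U$ and continuity of the matrix-valued map $u\mapsto R_u$.
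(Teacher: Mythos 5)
Your argument is correct and is essentially the same as the paper's: both take a convergent sequence in $\mathcal U$, use continuity of $u\mapsto R_u$ to get $R_{u_n}X\to R_{u_0}X$ (the paper invokes the continuous mapping theorem for this step), and conclude $R_{u_0}X\stackrel{d}{=}X$ by uniqueness of the weak limit. Your version merely spells out the intermediate steps that the paper leaves implicit.
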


\begin{proof}
    Let \(\{u_\ell\}_{\ell \in \mathbb{N}}\) be a sequence in \(\Sd\) converging to \(u_0\) such that \(X \eqd R_{u_\ell}X\) for all \(\ell\). The result follows from the convergence \(R_{u_\ell} \to R_{u_0}\) as $\ell \to \infty$, and the continuous mapping theorem \citep[Theorem~5.5]{Billingsley1968}.
\end{proof}

\begin{proof}[Proof of Proposition \ref{prop:positive_measure_spherical}]

    By Lemma~\ref{lemma: closed}, the set of directions of symmetry $\mathcal U$ is closed. So, it suffices to show that if $\mathcal U$ has positive measure, then it must be dense in $\Sd$.

    By the Lebesgue density theorem \citep[Corollary~261D]{Fremlin2003}, if $\mathcal U$ has positive measure, then there must exist a point $x_0 \in \mathcal{U}$ with metric density 1, i.e.,
    \[
    \lim_{r\downarrow 0} \frac{\nu(\mathcal U \cap \bb(x_0, r))}{\nu( \bb(x_0, r))} = 1,
    \]
    where 
    $\bb(x_0,r) \subset \Sd$ denotes the open geodesic ball of radius $r$, centered at $x_0$. 
    This implies that every neighborhood of $x_0$ contains points of $\mathcal U$.

     Now, suppose for contradiction that $\mathcal U$ is not dense in $\Sd$. Then there exists an open ball $\bb(y, \epsilon) \subset \Sd$ that contains no direction of symmetry.

    Consider the set of all geodesic arcs from $x_0$ to points in $\bb(y, \epsilon/4)$. Since $x_0$ is a point of metric density 1, the ball $\bb(x_0, \epsilon/4)$ must contain a point $x_1 \in \mathcal U$ that lies on one of these geodesic arcs. Now, reflect $x_0$ with respect to $x_1$ to get $x_2 = R_{x_1} (x_0)$. Since $x_0,x_1 \in \mathcal U$, by Lemma~\ref{lem:composition_symmetries}, also $x_2 \in \mathcal U$.

   Iterating this process, $x_{\ell+1} = R_{x_\ell} (x_{\ell-1})$, we generate a sequence $\left\{x_\ell\right\}_{\ell\in \nat}$ of symmetry directions that must intersect $\bb(y,\epsilon)$ (because for the geodesic distance $d_g$ on $\Sd$ we have $d_g(x_{\ell+1},x_\ell)\leq\epsilon/4$), which contradicts the assumption that $\bb(y, \epsilon)$ contains no symmetry directions. 
   Therefore, $\mathcal U$ must be dense in $\Sd$, and since it is closed, we have $\mathcal U = \Sd$, proving that the distribution is spherically symmetric.
	\end{proof}

Our main Theorem~\ref{theorem: dimension 2} (proven for $d=2$, although we conjecture that an analogous theorem with a random sample $\left\{ h_j \right\}_{j=1}^d$ holds for any $d>2$)  relies on two ingredients that isolate the true axes in the non-spherical case:
 For fixed $h\in\Sd[1]$, the set $H(h):=\{g\in\Sd[1]:\ g^\top X \eqd h^\top X\}$ is finite; otherwise, Proposition~\ref{proposition: spherical symmetry} would force spherical symmetry. Hence, a single direction leaves only finitely many “ambiguous” candidates for an axis. 
  For each false axis $u\notin\mathcal U$, the set $M_u:=\{h\in\Sd[1]:\ h^\top X \eqd h^\top R_u X\}$ has $\nu$-measure zero. Therefore, the Cartesian products $M_u\times M_u$ are $\nu^2$-null, and Fubini’s theorem shows that almost every random pair $(h_1,h_2)$ excludes all false axes simultaneously, leaving exactly $\mathcal U$.  

\begin{theorem} \label{theorem: dimension 2}
Let $X\in \RR^2$ be a random vector that fulfills the Carleman condition. Let $\{h_j\}_{j=1}^2 \subset \Sd[1]$ be a random sample of two independent directions with distribution $\nu$. Denote by $\mathcal U \subset \Sd[1]$ the (possibly empty) set of axes of symmetry of $X$. Suppose that   $X$ is not spherically symmetric, then the set of solutions to the system $h_j^\top X \stackrel{d}{=} h_j^\top R_u X$, $j = 1,2$ corresponds, with probability one, to the set of true symmetry axes $\mathcal U$ of $X$.    
\end{theorem}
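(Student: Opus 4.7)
The plan is to show that, almost surely, $\mathcal{U}^{h_1}\cap\mathcal{U}^{h_2}=\mathcal{U}$. The inclusion $\mathcal{U}\subseteq\mathcal{U}^{h_1}\cap\mathcal{U}^{h_2}$ is automatic: if $u$ is a true axis then $X\eqd R_u X$, so $h^\top X\eqd h^\top R_u X$ for every $h$. Everything hinges on ruling out false axes in the intersection, and I would follow the two-ingredient strategy signaled in the paragraph preceding the theorem.

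First, I would establish the finiteness of $\mathcal{U}^h$ for each fixed $h\in\Sd[1]$. Using $R_u^\top=R_u$, the defining condition of $\mathcal{U}^h$ rewrites as $R_u h\in H(h)$, where $H(h):=\{g\in\Sd[1]:g^\top X\eqd h^\top X\}$. Because $X$ is not spherically symmetric, Proposition~\ref{proposition: spherical symmetry} forces $H(h)$ to be finite (otherwise infinitely many projections of $X$ would share the common law $T=h^\top X$, giving sphericity). For each $g\in H(h)$ the equation $R_u h=g$ reads $2(u\cdot h)u=g+h$, which in $\RR^2$ has the unique axis-solution $u\propto g+h$ when $g\ne-h$, and $u\perp h$ when $g=-h$. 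Hence $|\mathcal{U}^h|<\infty$, and I can write $\mathcal{U}^h=\mathcal{U}\cup F_h$ with $F_h$ a finite set of ``false'' axes depending on $h$.

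Second, I would prove that for each fixed $u\notin\mathcal{U}$ the set $M_u:=\{h\in\Sd[1]:h^\top X\eqd h^\top R_u X\}$ has $\nu$-measure zero. Since $X$ satisfies Carleman and $\|R_u X\|=\|X\|$, both $X$ and $R_u X$ are moment-determinate, and the sharp Cram\'er--Wold result of \cite{Cuesta_etal2007} implies that two distinct Borel probability measures on $\RR^d$ cannot share one-dimensional marginals along a set of directions of positive $\nu$-measure; because the laws of $X$ and $R_u X$ differ when $u\notin\mathcal{U}$, this forces $\nu(M_u)=0$. Finally, I would combine these two facts with Fubini: conditioning on $h_1$, the set $F_{h_1}$ is finite almost surely, and a finite union bound over $u\in F_{h_1}$ yields $\pr(h_2\in\bigcup_{u\in F_{h_1}}M_u\mid h_1)=0$. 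Integrating out $h_1$ gives $\pr(\mathcal{U}^{h_1}\cap\mathcal{U}^{h_2}\supsetneq\mathcal{U})=0$, which together with the easy inclusion closes the argument.

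The main obstacle I anticipate is Step~1, specifically the algebraic uniqueness step that upgrades ``$H(h)$ finite'' to ``$\mathcal{U}^h$ finite.'' In $\RR^2$ the equation $R_u h=g$ has a unique axis-solution by the explicit calculation above, but for $d\geq 3$ the degenerate case $g=-h$ produces an entire $(d-2)$-sphere of axes $u\perp h$, so $\mathcal{U}^h$ can be uncountable and the finite union bound breaks. This is precisely why the result is confined to the plane and why the analogue for $d\geq 3$ (presumably requiring $d$ independent random directions) is left as a conjecture. Steps~2 and~3, by contrast, are direct applications of the Cram\'er--Wold extension and a routine Fubini argument, so the subtlety of the two-dimensional proof is concentrated in the algebra of Step~1.
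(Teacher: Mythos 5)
Your proof is correct and follows essentially the same route as the paper: finiteness of the set of ambiguous directions via Proposition~\ref{proposition: spherical symmetry}, nullity of each $M_u$ for $u\notin\mathcal U$ via the Cuesta-Albertos et al.\ extension of Cram\'er--Wold, and a Fubini argument to rule out false axes on a $\nu^2$-full set of pairs $(h_1,h_2)$. If anything you are slightly more careful than the paper in the algebraic step, explicitly handling the degenerate case $R_u h=-h$ (giving $u\perp h$), whereas the paper's parametrization $u=(h_1+h_2)/\|h_1+h_2\|$ with $h_1\ne -h_2$ silently omits it; either way the conclusion that only finitely many $u$ arise per $h$ holds in $\RR^2$.
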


\begin{proof} For $u \in \Sd[1]$, denote by $M_u \subseteq \Sd[1]$ the set of those $h \in \Sd[1]$ that fulfill $h\tr R_u X \eqd h\tr X$. Take $h \in \Sd[1]$ fixed, and consider the distribution of $h\tr X$ in $\RR$. Because $X$ is not spherically symmetric, 
Proposition~\ref{proposition: spherical symmetry} gives that the set of directions $H(h) = \left\{ g \in \Sd[1] \colon g\tr X \eqd h\tr X \right\}$ must be finite. 


We see that $h \in M_u$ only if $u$ can be written as $(h_1 + h_2)/\left\Vert h_1 + h_2 \right\Vert$ for some $h_1, h_2 \in H(h)$, $h_1 \ne -h_2$.
Because there are only finitely many directions that can be written like this, we see that $h \in M_u$ only for finitely many directions $u \in \Sd[1]$.

At the same time, for each $u \in \Sd[1]$ such that $u \notin \mathcal U$, we know that $M_u$ is of null $\nu$-measure in $\Sd[1]$ by \citet[Corollary~3.2]{Cuesta_etal2007}. 

Overall, we have that each $h \in \Sd[1]$ is contained in finitely many sets $M_u$, and at the same time each $M_u$ is of null $\nu$-measure. Consider now the union
    \[  M = \bigcup_{u \notin \mathcal U} M_u \times M_u  \]
of all the Cartesian products $M_u \times M_u \subset \Sd[1] \times \Sd[1]$ over all directions $u \in \Sd[1]$ that are not axes of symmetry of $X$. By the arguments above, for each $h \in \Sd[1]$, there are only finitely many $u \notin \mathcal U$ such that $h \in M_u$, and for each of these finitely many $u$, the set $M_u$ is of null $\nu$-mass. Consequently, the section
    \begin{equation} \label{eq: Mh}
    M(h) = \left\{ (h_1, h_2) \in M \colon h_1 = h \right\}  \end{equation}
of $M$ with first coordinate $h$ is a finite union of $\nu$-null sets in $\Sd[1]$, hence it is $\nu$-null. The mass of $M$ with respect to the product measure $\nu^2$ can thus be evaluated as
    \[
    \nu^2(M) = \int_{\Sd[1]} \int_{M(h)} 1 \,\mathrm{d}\,\nu(h_2) \,\mathrm{d}\,\nu(h) = \int_{\Sd[1]} 0 \,\mathrm{d}\,\nu(h) = 0,
    \]
by Fubini's theorem \citep[Theorem~4.4.5]{Dudley2002}. We have shown that for $\nu^2$-almost every random choice of $(h_1, h_2) \in \Sd[1] \times \Sd[1]$, every $u \in \Sd[1]$ such that $h_i\tr R_u X \eqd h_i\tr X$ for both $i  = 1, 2$ must satisfy $u \in \mathcal U$, as we wanted to show.
\end{proof}


A straightforward extension of the proof of Theorem~\ref{theorem: dimension 2} to dimension $d > 2$ is not possible, since in that case, for each $h$ fixed, the set $M(h)$ from~\eqref{eq: Mh} is a union of $\nu$-null sets that is indexed by a $\nu$-null set (in particular, the index set may be uncountable). Such a union, however, does not need to be $\nu$-null.

	\subsection{A counter-example when Carleman condition is not assumed}
	
	Let $K \subset \RR^2$ be a symmetric convex body (a compact convex set with non-empty interior) centered at the origin. The $K$-norm of $x \in \RR^2$ is defined as $\left\Vert x \right\Vert_K = \inf \left\{ \lambda \geq 0 \colon x \in \lambda K \right\}$. The $K$-norm is a norm on $\RR^2$ \citep[Section~1.7.2]{Schneider2014}. A random vector $X$ in $\RR^2$ is said to be $K$-symmetric (sometimes called pseudo-isotropic, see \cite[Chapter~7]{Fang_etal1990}, \cite[Chapter~6]{Koldobsky2005} and \cite[Chapter~6]{Ranosova2023} for many references) if its characteristic function takes the form
	\begin{equation} \label{eq: cf 1}
		\psi_X(t) = \E \exp(\mathrm{i} \left\langle t, X \right\rangle) = \xi(\left\Vert t \right\Vert_K) \quad \mbox{for all }t \in \RR^2,
	\end{equation}
	where $\xi$ is a continuous, real-valued function. 
	
	First, fix $u \in \Sd[1]$. The random vector $R_u X$ has a characteristic function
	\begin{equation} \label{eq: cf 2}
		\psi_{R_u X}(t) = \E \exp(\mathrm{i} \left\langle t, R_u X \right\rangle) = \E \exp(\mathrm{i} \left\langle R_u \, t, X \right\rangle) = \psi_X(R_u \, t) = \xi(\left\Vert R_u \, t \right\Vert_K)
	\end{equation}
	for $t \in \RR^2$. Comparing~\eqref{eq: cf 1} and~\eqref{eq: cf 2} we see that $ X \overset{d}{=} R_u X$ if and only if 
	\begin{equation}    \label{eq: K symmetry}
		\left\Vert t \right\Vert_K = \left\Vert R_u t \right\Vert_K \quad \mbox{for all }t \in \RR^2,
	\end{equation}
	which is true if and only if the body $K$ is axially symmetric around the axis given by $u$.
	
	Now, let $u \in \Sd[1]$ and $h \in \Sd[1]$ be given. The characteristic function of the random variable $\left\langle h, X \right\rangle$ is for $t \in \RR$ given by
	\[
	\psi_{\left\langle h, X \right\rangle}(t) = \E \exp(\mathrm{i} t \left\langle h, X \right\rangle) = \E \exp(\mathrm{i} \left\langle  t\, h, X \right\rangle) = \xi(\left\Vert t\, h \right\Vert_K) = \xi( \left\vert t \right\vert \left\Vert h \right\Vert_K). 
	\]
	The characteristic function of the random variable $\left\langle h, R_u X \right\rangle$ is similarly
\begin{equation*}
\begin{aligned}
  	\psi_{\left\langle h, R_u X \right\rangle}(t) = \E \exp(\mathrm{i} t \left\langle h, R_u X \right\rangle) = \E \exp(\mathrm{i} \left\langle  t\, R_u h, X \right\rangle) & = \xi(\left\Vert t\, R_u h \right\Vert_K) \\
    & = \xi( \left\vert t \right\vert \left\Vert R_u h \right\Vert_K).   
    \end{aligned}
\end{equation*}

	Because $\xi$ is continuous and $\xi(0_d) = \E \exp(\mathrm{i} \left\langle 0_d, X \right\rangle) = 1$, we obtain that $\left\langle h, X \right\rangle \overset{d}{=} \left\langle h, R_u X \right\rangle$ if and only if
	\begin{equation} \label{eq: h symmetry}
		\left\Vert h \right\Vert_K = \left\Vert R_u h \right\Vert_K.
	\end{equation}
	
	\begin{figure}
		\centering
		\includegraphics[width=0.5\linewidth]{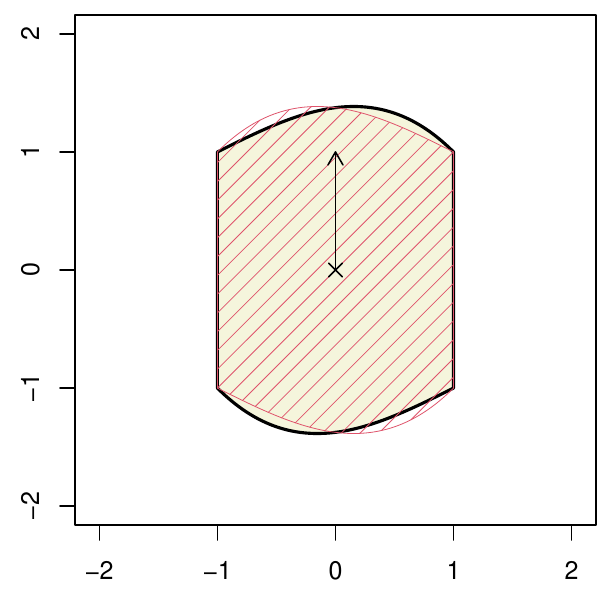}
		\caption{The convex body $K$ (beige body with black boundary line) and its reflection $R_u K$ around the vertical axis given by $u = (0,1)^\top \in \Sd[1]$ (the shaded body in red). The bodies $K$ and $R_u K$ are not identical, meaning that $K$ is not axially symmetric around $u$ (or any other vector in $\Sd[1]$).}
		\label{fig: K}
	\end{figure}
	
	Consider now the special convex body $K$ displayed in Figure~\ref{fig: K} (the beige body with black boundary line). The body $K$ is symmetric around the origin (that is, $K = -K$), but is clearly not axially symmetric around any $u \in \Sd[1]$. By \cite[Theorem~1]{Ferguson1962}, see also \cite[Theorem~12]{Ranosova2023}, there exists a $K$-symmetric random vector $X$ with this particular choice of $K$. The distribution of $X$ follows a multivariate extension of the Cauchy distribution. Because $K$ is not axially symmetric around any $u \in \Sd[1]$, formula~\eqref{eq: K symmetry} gives that $X$ is not axially symmetric around any $u \in \Sd[1]$.
	
	On the other hand, in Figure~\ref{fig: K} we see that for all $x \in \RR^2$ in the line segments on the boundary of $K$ (that is, for $x \in K$ whose first coordinate is either $1$ or $-1$) we have for $u = (0,1)^\top$
	\[
	\left\Vert R_u \, x \right\Vert_K = \left\Vert x \right\Vert_K = 1. 
	\]
	This means that~\eqref{eq: h symmetry} is satisfied for all such $x$, and the same is true for all $h = x/\left\Vert x \right\Vert \in \Sd[1]$. We see that even though there is no axis of symmetry of $X$, if we sample $\mathfrak{H} = \{ h_1, h_2 \}$ independently from the uniform distribution on $\Sd[1]$, then with positive probability we get a non-empty set $\mathcal{U}^{\mathfrak{H}}$ containing $u = (0,1)^\top$.
	
		\section{A plug-in based estimator} 
	
    Suppose that we observe a random sample $\aleph_n = \left\{ X_1, \dots, X_n \right\}$ from a distribution $P$ in $\RR^d$. Our aim is to estimate the axes of symmetry of $P$ based on the random sample $\aleph_n$. To define the first estimator that we propose, we divide the sample $\aleph_n$, randomly into two balanced groups. After reindexing, we obtain \(\aleph_1 = \{X_1, \dots, X_{\lfloor n/2 \rfloor}\}\) and \(\aleph_2 = \{X_{\lfloor n/2 \rfloor + 1}, \dots, X_n\}\).  For $k\in \mathbb{N}, k\geq 1$, consider a random set $\mathfrak H = \{h_1, \dots, h_k\}$, sampled from the uniform distribution $\nu^k$ over ${(\Sd)}^k$, we consider the function $g_{\mathfrak H}$ defined by \eqref{eq: gH} and the empirical versions of the involved quantities:
	\begin{align*}
		F^n_{\langle X, h \rangle}(t) &= \frac{1}{|\aleph_1|} \sum_{X_i \in \aleph_1} \ind\{\langle X_i, h \rangle \leq t\}, \\
		F^n_{\langle R_u X, h \rangle}(t) &= \frac{1}{|\aleph_2|} \sum_{X_i \in \aleph_2} \ind\{\langle R_u X_i, h \rangle \leq t\}, \\
		\widehat g_{n,h}(u) &= \sup_{t \in \RR} \left| F^n_{\langle X, h \rangle}(t) - F^n_{\langle R_u X, h \rangle}(t) \right|, \\
		\widehat g_{n,\mathfrak{H}}(u) &= \frac1k \sum_{j=1}^k \widehat g_{n, h_j}(u),
	\end{align*}
where $\left\vert A \right\vert$ stands for the cardinality of a finite set $A$, and $\ind\{A\}$ is the indicator of $A$ giving $1$ if $A$ is true and $0$ otherwise. To simplify the notation, we will omit the \(\mathfrak{H}\) in $\widehat{g}_{n,\mathfrak{H}}$ and $g_{\mathfrak{H}}$.
	
	As a naive approach, one might be tempted to estimate $\mathcal{U}$ by finding the minimum of $\widehat g_n$ over $\Sd$, but this is not a good solution. To see this, note that it is clear that if $g$ is convex and if $\widehat g_n \to g$ uniformly as $n \to \infty$, then $\lim_{n\to\infty}\arg\min \widehat g_n = \arg\min g$, a.s. However, there is no guarantee for $g$ not to have several minima, and in this case, the most powerful statement in this setup can be derived from \citet[Theorem 7.33]{Rockafellar_Wets1998}: if $\widehat g_n$ converges uniformly to $g$, then $\limsup_{n\to \infty} \arg\min \widehat g_n \subset \arg\min g$, a.s. In particular, there is no guarantee of being able to recover all the axes of symmetry of $P$ using $\widehat g_n$.
    
    For this reason, we use a different approach. We consider an $\epsilon_n$-level set of $\widehat g_n$, defined as the set	
    \begin{equation}\label{eq:estimator}
        \mathcal{U}_n(\epsilon_n) := \left\{ u \in \Sd : \widehat g_n(u) < \epsilon_n \right\},
    \end{equation}
    where $\epsilon_n = 1/\beta_n \to 0$ as $n \to \infty$, with
    \begin{equation}\label{eq:betan}
        \beta_n = \frac{\sqrt{n}}{\log n}.
    \end{equation}
	
	We will demonstrate that \(\mathcal{U}_n(\epsilon_n)\) converges to \(\mathcal{U}\) in the Hausdorff distance (see Theorem~\ref{teo:consistency} below). A common strategy for showing that empirical level sets converge to their population counterparts is to establish the uniform convergence of the sequence of functions to its population version (see \cite[Theorem 2.1]{molchanov1998limit}, \cite{Cholaquidis:2022:LevelSetDensity} and references therein). This result is formalized in the following proposition.
	\begin{proposition}\label{prop:uniform_convergence}
		Given \(\widehat g_n\) and \(g\) as introduced above,
		\[
		\sup_{v \in \Sd} \frac{\sqrt{n}}{a_n} \, |\widehat g_n(v) - g(v)| \to 0 \text{ 
        as $n \to \infty$, almost surely,}
		\]
		where \(a_n = \sqrt{\delta_n\log n}\), with \(\delta_n \to \infty\). In this result, the almost sure convergence is understood both with respect to the random sample $\aleph_n$, and the random directions $\mathfrak{H}$.
	\end{proposition}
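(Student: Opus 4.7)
The plan is to reduce uniform control of $\widehat g_n-g$ over $\Sd$ to a uniform Glivenko--Cantelli-type rate for empirical distribution functions of one-dimensional projections of $X$, and then control the latter via a Vapnik--Chervonenkis (VC) tail bound for the class of closed half-spaces in $\RR^d$.

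First I would perform a deterministic reduction that removes the dependence on $\mathfrak H$. Using $\bigl|\sup_t|a(t)|-\sup_t|b(t)|\bigr|\leq \sup_t|a(t)-b(t)|$ followed by the triangle inequality,
\[
|\widehat g_{n,h}(u)-g_h(u)|\leq \sup_{t\in\RR}\bigl|F^n_{\langle X,h\rangle}(t)-F_{\langle X,h\rangle}(t)\bigr|+\sup_{t\in\RR}\bigl|F^n_{\langle R_uX,h\rangle}(t)-F_{\langle R_uX,h\rangle}(t)\bigr|.
\]
Because $R_u$ is symmetric and orthogonal, $\langle R_uX,h\rangle=\langle X,R_uh\rangle$, so the second supremum is a Kolmogorov distance for the projection of $X$ along $w:=R_uh\in\Sd$. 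Averaging over $j=1,\dots,k$ and then taking $\sup_{v\in\Sd}$ absorbs both $u$ and the $h_j$'s into a free direction $w\in\Sd$, yielding the deterministic bound
\[
\sup_{v\in\Sd}|\widehat g_n(v)-g(v)|\leq \Delta_n^{(1)}+\Delta_n^{(2)},\qquad \Delta_n^{(i)}:=\sup_{w\in\Sd,\ t\in\RR}\bigl|F^{(i),n}_{\langle X,w\rangle}(t)-F_{\langle X,w\rangle}(t)\bigr|,
\]
where $F^{(i),n}$ is the empirical CDF of the projection of $X$ formed from the sub-sample $\aleph_i$. This bound depends only on $\aleph_n$ and not on $\mathfrak H$, which will make the final conclusion automatically joint in $(\aleph_n,\mathfrak H)$.

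Second, I would apply the VC inequality to each $\Delta_n^{(i)}$. The class $\{\{x:\langle x,w\rangle\leq t\}:\ w\in\Sd,\ t\in\RR\}$ is contained in the class of all half-spaces of $\RR^d$, which has VC dimension $d+1$; hence there exist constants $C_d,c_d>0$ with
\[
\p\bigl(\Delta_n^{(i)}>\epsilon\bigr)\leq C_d\, n^{\,d+1}\exp(-c_d\, n\, \epsilon^2)\qquad\text{for every }\epsilon>0.
\]
Setting $\epsilon=c\,a_n/\sqrt n=c\sqrt{\delta_n\log n/n}$ for arbitrary fixed $c>0$ gives $\p(\Delta_n^{(i)}>c\,a_n/\sqrt n)\leq C_d\, n^{\,d+1-c_d c^2\delta_n}$, which is summable in $n$ because $\delta_n\to\infty$. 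Borel--Cantelli then yields $\Delta_n^{(i)}\leq c\,a_n/\sqrt n$ eventually almost surely; letting $c\downarrow 0$ along a rational sequence gives $\Delta_n^{(i)}=o(a_n/\sqrt n)$ a.s. Combining with the deterministic reduction above delivers $(\sqrt n/a_n)\sup_v|\widehat g_n(v)-g(v)|\to 0$ a.s.

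The main obstacle is really conceptual rather than technical: recognising that the reflection matrix can be absorbed into the direction via $\langle R_uX,h\rangle=\langle X,R_uh\rangle$, converting the triple supremum over $(u,h,t)$ into a supremum over a single half-space parameter $(w,t)$ and placing the problem squarely within classical VC theory. After that observation, the rate $a_n=\sqrt{\delta_n\log n}$ is calibrated precisely so that the VC tail bound is summable for every slowly divergent $\delta_n$, and everything else is standard empirical-process bookkeeping.
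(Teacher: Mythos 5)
Your proposal is correct and follows essentially the same route as the paper: the same deterministic reduction via the triangle inequality that removes $\mathfrak H$ from the picture, the same key observation that $\langle R_u x,h\rangle=\langle x,R_u h\rangle$ turns the reflected-projection events into plain half-spaces indexed by $w=R_uh\in\Sd$, the same VC bound for half-spaces (VC dimension $\leq d+1$), the same calibration $\epsilon\asymp a_n/\sqrt n$ making the tail summable for any $\delta_n\to\infty$, and Borel--Cantelli to finish. The paper organizes the reduction slightly differently (conditioning on $\mathfrak H$ and writing the half-space class as $\mathfrak B_{v,t,h}=\{x:t-(R_v^\top h)^\top x\geq 0\}$, then quoting Theorems 12.8 and 13.3 of Devroye--Lugosi for the explicit constant $C\leq 4e^8$ and exponent $n^{2(d+1)}$), but the substance is identical.
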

	
	\begin{proof}
		
		Let us introduce some additional notation. For \(v \in \Sd\) and fixed \(h \in \Sd\), define \(\mathfrak{B}_{v,t,h} = \{ x \in \RR^d : \langle R_v x, h \rangle \leq t \}\). Then, for fixed $h_0$,
		\begin{equation*}
            \p\{ \langle R_v X, h \rangle \leq t \big|\, h = h_0\} = P(\mathfrak{B}_{v,t,h_0}).
		\end{equation*}
		Let \(P_n(A) = (1/n) \sum_{i=1}^n \ind\{ X_i \in A \}\) be the empirical measure based on the sample $\aleph_n$. 
		Let us prove first that there exists $0<C\leq 4 e^8$ such that for all \(n > 2(d+1)\),
		\begin{equation}\label{boundconvunif}
			\p \left\{ \sup_{v \in \Sd} | \widehat g_n(v) - g(v) | > \epsilon \right\} \leq 2 C ( n^{2d+2} + 1 ) e^{- n \epsilon^2 / 2 }.
		\end{equation}

		We will first condition on fixed $\mathfrak H$, so for all \(h \in \Sd\) and \(v \in \Sd\), we have
        \begin{equation} \label{eq: cont1}
		\begin{aligned}
			\left| \widehat{g}_{n,h}(v) - g_h(v) \right| &= \left| \sup_{t \in \RR} \left| F^n_{\langle X, h \rangle}(t) - F^n_{\langle R_v X, h \rangle}(t) \right| - \sup_{t \in \RR} \left| F_{\langle X, h \rangle}(t) - F_{\langle R_v X, h \rangle}(t) \right| \right| \\
			&\leq \sup_{t \in \RR} \left| \left( F^n_{\langle X, h \rangle}(t) - F_{\langle X, h \rangle}(t) \right) - \left( F^n_{\langle R_v X, h \rangle}(t) - F_{\langle R_v X, h \rangle}(t) \right) \right| \\
			&\leq \sup_{t \in \RR} \left| F^n_{\langle X, h \rangle}(t) - F_{\langle X, h \rangle}(t) \right| + \sup_{t \in \RR} \left| F^n_{\langle R_v X, h \rangle}(t) - F_{\langle R_v X, h \rangle}(t) \right| \\
			&= \sup_{t \in \RR} \left| F^n_{\langle X, h \rangle}(t) - F_{\langle X, h \rangle}(t) \right| + \sup_{t \in \RR} \left| P_n( \mathfrak{B}_{v,t,h} ) - P( \mathfrak{B}_{v,t,h} ) \right|.
		\end{aligned}
        \end{equation}
    Let $\mathfrak{H} = \{h_1, \dots, h_k\}\subset \Sd$ be fixed, then
	\begin{equation} \label{eq: cont2}	
        \begin{aligned}
			\sup_{v\in \Sd}| \widehat{g}_n(v) - g(v)| & = \sup_{v\in \Sd} \ \ \bigg| \frac 1 k \sum_{j=1}^k \widehat{g}_{n,h_j}(v) - g_{h_j}(v) \bigg| \\
			& \leq \sup_{v\in \Sd} \ \  \frac 1 k \sum_{j=1}^k \ \  |  \widehat{g}_{n,h_j}(v) - g_{h_j}(v) | \\ 
			& \leq \sup_{v,h \in  \Sd} \ \ |  \widehat{g}_{n,h}(v) - g_{h}(v) |.
		\end{aligned}
        \end{equation}
        %
		Therefore,
        \begin{align*}
			\p \left\{ \sup_{v \in \Sd} | \widehat g_n(v) - g(v) | > \epsilon \right\} & = \E_{\mathfrak{H}} \Bigg[\p \left( \sup_{v \in \Sd} | \widehat g_n(v) - g(v) | > \epsilon | \mathfrak H \right)
            \Bigg],
		\end{align*}
        and        
		\begin{align*}
			\p \left\{ \sup_{v \in \Sd} | \widehat g_n(v) - g(v) | > \epsilon  \bigg | \mathfrak H \right\} &\leq \p \left\{ \sup_{h \in \Sd, t \in \RR} \left| F^n_{\langle X, h \rangle}(t) - F_{\langle X, h \rangle}(t) \right| > \frac{\epsilon}{2} \right\} \\
			& \hspace*{-1.25em} + \p \left\{ \sup_{v, h \in \Sd, t \in \RR} \left| P_n( \mathfrak{B}_{v,t,h} ) - P( \mathfrak{B}_{v,t,h} ) \right| > \frac{\epsilon}{2} \right\}.
		\end{align*}   
        
	   Let \(\mathcal{B} = \{ \mathfrak{B}_{v,t,h} : v, h \in \Sd, t \in \RR \}\). Each set \(\mathfrak{B}_{v,t,h}\) can be expressed as \(\{ x \in \RR^d : f_{v,t,h}(x) \geq 0 \}\), where
		\[
		f_{v,t,h}(x) = t - \langle R_v x, h \rangle = t - ( R_v^\top h )^\top x = t - w^\top x,
		\]
		with \(w = R_v^\top h \in \Sd\). Since \(R_v\) is an orthogonal matrix and \(h\) varies over \(\Sd\), \(w\) also varies over \(\Sd\).
		
		We see that the class of functions \(\mathcal{F} = \{ f_{v,t,h} : \RR^d \to \RR \colon v, h \in \Sd, t \in \RR \} \) consists of linear functions of the form \(f_{w,t}(x) = t - w^\top x\), where \(w \in \Sd\) and \(t \in \RR\).  It follows that  \(\mathcal{B}\)  corresponds to the set of half-spaces in \(\RR^d\), whose  Vapnik Chervonenkis (VC) dimension is at most \(d + 1\). By Theorems~12.8 and~13.3 in \citet{Devroye_Lugosi1996}, for all \(n > 2(d+1)\), we have
		\begin{equation*}
			\p \left\{ \sup_{v, h \in \Sd, t \in \RR } \left| P_n( \mathfrak{B}_{v,t,h} ) - P( \mathfrak{B}_{v,t,h} ) \right| > \epsilon \right\} \leq C ( n^{2(d+1)} + 1 ) e^{- 2 n \epsilon^2 },
		\end{equation*}
		where \(C \leq 4 e^8\). Similarly,
		\begin{equation*}
			\p \left\{ \sup_{h \in \Sd, t \in \RR} \left| F^n_{\langle X, h \rangle}(t) - F_{\langle X, h \rangle}(t) \right| > \epsilon \right\} \leq C ( n^{2(d+1)} + 1 ) e^{- 2 n \epsilon^2 }.
		\end{equation*}
		Then, we have that
		\begin{eqnarray*}
			\p \left\{ \sup_{v \in \Sd} \frac{\sqrt{n}}{a_n} \, |\widehat{g}_n(v) - g(v)| > \epsilon \right\} 
			&\leq&
			2 C ( n^{2d+2} + 1 ) e^{- n \epsilon^2 a_n^2/ (2n) }
			\\
			&\leq &
			2 C ( n^{2d+2} + 1 )n^{-\epsilon^2 \delta_n/ 2 }
			\\
			&=&
			2 C ( n^{2d+2-\epsilon^2 \delta_n/ 2} + n^{-\epsilon^2 \delta_n/ 2 }).
		\end{eqnarray*}
		
		The assumption $\delta_n\to \infty$ implies that, for every $\epsilon >0$, there exists $N (=N(\epsilon))$ such that if $n\geq N$, then
		\[
		2 C ( n^{2d+2-\epsilon^2 \delta_n/ 2} + n^{-\epsilon^2 \delta_n/ 2 })
		< n^{-2}.
		\] 
		
		Combining these inequalities, we obtain the desired bound \eqref{boundconvunif}. Lastly, 	the result is a direct application of Borel-Cantelli's lemma \citep[Theorem~8.3.4]{Dudley2002}.
	\end{proof}

    \begin{lemma}   \label{lemma: continuity}
    Assume that \(P\) is an absolutely continuous distribution in $\RR^d$. Then for all $\mathfrak{H}$,  the function $g$ given by \eqref{eq: gH} is continuous. 
    \end{lemma}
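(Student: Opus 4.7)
The plan is to reduce continuity of $g = \frac{1}{k}\sum_{j=1}^k g_{h_j}$ to continuity, for each fixed $h \in \Sd$, of the map $u \mapsto g_h(u)$; once this is established, the finite average is continuous and the claim follows. Throughout, I would use that $R_u = 2uu^\top - \I_d$ depends continuously on $u \in \Sd$, hence $R_{u_n}h \to R_u h$ whenever $u_n \to u$, which in turn gives $\langle R_{u_n}X, h\rangle = \langle X, R_{u_n}h\rangle \to \langle X, R_u h\rangle$ almost surely and therefore in distribution.

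Next, I would exploit the absolute continuity of $P$ as follows. For any non-zero $w \in \RR^d$, the projection $\langle X, w\rangle$ has a density on $\RR$ (obtained by integrating the density of $P$ over the hyperplane $\{x : \langle x, w\rangle = t\}$), so its distribution function $t \mapsto F_{\langle X, w\rangle}(t)$ is continuous. In particular, the limit CDF $t \mapsto F_{\langle R_u X, h\rangle}(t) = F_{\langle X, R_u h\rangle}(t)$ is continuous for every $u \in \Sd$. Combining the weak convergence above with the continuity of the limit CDF, I would invoke P\'olya's theorem (pointwise convergence of distribution functions to a continuous limit upgrades to uniform convergence on $\RR$) to conclude
\[
\sup_{t \in \RR} \bigl| F_{\langle R_{u_n} X, h\rangle}(t) - F_{\langle R_u X, h\rangle}(t) \bigr| \;\longrightarrow\; 0.
\]

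Finally, writing $\phi_n(t) = F_{\langle X, h\rangle}(t) - F_{\langle R_{u_n} X, h\rangle}(t)$ and $\phi(t) = F_{\langle X, h\rangle}(t) - F_{\langle R_u X, h\rangle}(t)$, the reverse triangle inequality for the supremum norm gives
\[
\bigl| g_h(u_n) - g_h(u) \bigr| \;=\; \bigl| \|\phi_n\|_\infty - \|\phi\|_\infty \bigr| \;\le\; \|\phi_n - \phi\|_\infty \;\longrightarrow\; 0,
\]
which proves continuity of $g_h$ and hence of $g$. The only delicate point is the upgrade from pointwise to uniform convergence of CDFs; this is the step where absolute continuity of $P$ is genuinely used, since without continuity of the limit CDF one could only bound $g_h(u_n) - g_h(u)$ along continuity points of $F_{\langle R_u X, h\rangle}$, which is insufficient for controlling the supremum.
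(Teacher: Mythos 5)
Your proof is correct and follows essentially the same route as the paper: bound $|g(u_n)-g(u)|$ by the Kolmogorov distance between $\langle R_{u_n}X,h_j\rangle$ and $\langle R_u X,h_j\rangle$, use continuity of $u\mapsto R_u$ to get convergence in distribution, use absolute continuity of $P$ to get a continuous limit CDF, and conclude by Pólya's theorem. If anything, you state the Pólya step more explicitly than the paper, which only asserts that the right-hand side of its bound vanishes.
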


    \begin{proof}
    Consider $\mathfrak{H}$ fixed. Similarly as in~\eqref{eq: cont1} and~\eqref{eq: cont2}, for any $u_\ell \to u$ in $\Sd$ as $\ell \to \infty$ we have that 
        \begin{equation} \label{eq: cont3}
        \begin{aligned}
        \left\vert g(u_\ell) - g(u) \right\vert & \leq  \max_{j=1,\dots, k} \sup_{t \in \RR} \left| F_{\left\langle R_{u_\ell} X, h_j \right\rangle}(t) - F_{\left\langle R_{u} X, h_j \right\rangle}(t) \right|.
        \end{aligned}
        \end{equation}
    Because $P$ is absolutely continuous, for each $\mathfrak{H}$ the distribution function $F_{\left\langle R_{u} X, h_j \right\rangle}$ is continuous as a function of $u$. At the same time, since $u_\ell  \to u$ in $\Sd$, we have that $\left\langle R_{u_\ell} X, h_j \right\rangle$ converges in distribution to $\left\langle R_{u} X, h_j \right\rangle$ for each $j = 1, \dots, k$, meaning that as $\ell \to \infty$, the right-hand side of~\eqref{eq: cont3} vanishes.
    \end{proof}

	The following theorem states that the level set \(\mathcal{U}_n(\epsilon_n)\) from~\eqref{eq:estimator}, for a properly defined sequence $\epsilon_n$, converges almost surely to \(\mathcal{U}\) in the Hausdorff distance. That is, with probability one,
	\[
	d_H(\mathcal{U}, \mathcal{U}_n(\epsilon_n)) := \max\left\{ \sup_{a \in \mathcal{U}} d_E(a, \mathcal{U}_n(\epsilon_n)), \ \sup_{c \in \mathcal{U}_n(\epsilon_n)} d_E(c, \mathcal{U}) \right\} \to 0, \text{ as } n \to \infty,
	\]
    where $d_E(a, B) = \inf_{b \in B} \left\Vert a - b \right\Vert$ is the Euclidean distance of $a$ and a set $B$.

	\begin{theorem}\label{teo:consistency}
		Assume that \(P\) is an absolutely continuous distribution in $\RR^d$. Let \(\beta_n\) be as in~\eqref{eq:betan}, and \(\mathcal{U}_n(\epsilon_n)\) as in \eqref{eq:estimator}. Define \(\epsilon_n = 1/\beta_n\); then
		\[
		d_H(\mathcal{U}, \mathcal{U}_n(\epsilon_n)) \to 0 \quad \text{almost surely, as } n \to \infty.
		\]
	\end{theorem}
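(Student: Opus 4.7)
The plan is to split the Hausdorff distance into its two one-sided parts and handle each by combining three ingredients: the uniform convergence in Proposition~\ref{prop:uniform_convergence}, the continuity of $g$ in Lemma~\ref{lemma: continuity}, and the identification $\mathcal U^{\mathfrak H} = \mathcal U$ from Theorem~\ref{theorem: dimension 2} (which also requires the Carleman condition on $X$, dimension $d=2$, and $|\mathfrak H|\ge 2$, none of which is made explicit in the statement but all of which are needed for the proof). The case in which $X$ is spherically symmetric is immediate: then $g\equiv 0$ and $\mathcal U=\Sd$, so uniform convergence forces $\mathcal U_n(\epsilon_n)=\Sd$ eventually, and $d_H=0$. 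Assume henceforth that $X$ is not spherically symmetric.

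First I would calibrate the rate in Proposition~\ref{prop:uniform_convergence} against $\epsilon_n$: choosing $\delta_n\to\infty$ slowly enough, for instance $\delta_n=\log\log n$, gives $a_n/\sqrt{n}=\sqrt{\delta_n\log n}/\sqrt{n}=o(\log n/\sqrt{n})=o(\epsilon_n)$. By the proposition, almost surely
\[
\sup_{v\in\Sd}|\widehat g_n(v)-g(v)|<\epsilon_n/2
\quad\text{for all $n$ large enough.}
\]

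For the first one-sided bound $\sup_{a\in\mathcal U}d_E(a,\mathcal U_n(\epsilon_n))\to 0$: every $a\in\mathcal U$ satisfies $g(a)=0$, hence $\widehat g_n(a)<\epsilon_n$ for $n$ large, so $a\in\mathcal U_n(\epsilon_n)$ and the supremum is identically zero eventually. For the reverse bound $\sup_{c\in\mathcal U_n(\epsilon_n)}d_E(c,\mathcal U)\to 0$, I argue by contradiction and compactness. Were it to fail, I could extract $c_n\in\mathcal U_n(\epsilon_n)$ with $d_E(c_n,\mathcal U)\ge\eta>0$ converging (along a subsequence, using compactness of $\Sd$) to some $c^*\in\Sd$ with $d_E(c^*,\mathcal U)\ge\eta$. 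From $\widehat g_n(c_n)<\epsilon_n\to 0$ together with the uniform rate above we get $g(c_n)\to 0$; continuity of $g$ (Lemma~\ref{lemma: continuity}) then yields $g(c^*)=0$, i.e.\ $c^*\in\mathcal U^{\mathfrak H}$. Theorem~\ref{theorem: dimension 2}, applied to the random $\mathfrak H$, gives $\mathcal U^{\mathfrak H}=\mathcal U$ almost surely, whence $c^*\in\mathcal U$, contradicting $d_E(c^*,\mathcal U)\ge\eta$.

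The main obstacle is this reverse bound, and within it the decisive step is the use of Theorem~\ref{theorem: dimension 2} to rule out spurious limit points outside $\mathcal U$: without that identification, the compactness/continuity argument would only place $c^*$ in the possibly strictly larger set $\mathcal U^{\mathfrak H}$, and the estimator could retain spurious directions in the limit (cf.\ the counterexample in Section~3.2 when the Carleman condition fails). A secondary technical point is the calibration of $\delta_n$ so that the nonparametric rate $a_n/\sqrt n$ strictly dominates $\epsilon_n=\log n/\sqrt n$; this is precisely what the choice of $\beta_n$ in~\eqref{eq:betan} accommodates, leaving enough margin both to include every true axis in $\mathcal U_n(\epsilon_n)$ and to exclude any non-axis in the limit.
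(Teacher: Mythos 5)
Your proof follows the paper's own route---uniform convergence from Proposition~\ref{prop:uniform_convergence}, continuity of $g$ from Lemma~\ref{lemma: continuity}, the easy inclusion $\mathcal U \subset \mathcal U_n(\epsilon_n)$ eventually, and a compactness/contradiction argument for the reverse Hausdorff bound---but you make one substantive addition that the paper leaves implicit. In the last step of the paper's proof, the authors conclude from $g(u_0)=0$ that $u_0 \in \mathcal U$; however, $g(u_0)=0$ only says $u_0 \in \mathcal U^{\mathfrak H}$, and $\mathcal U^{\mathfrak H}$ can be strictly larger than $\mathcal U$ (the pseudo-isotropic Cauchy counterexample in the paper is absolutely continuous, has $\mathcal U = \emptyset$, and yet gives $\mathcal U^{\mathfrak H} \neq \emptyset$ with positive probability over $\mathfrak H$). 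Your explicit appeal to Theorem~\ref{theorem: dimension 2} to obtain $\mathcal U^{\mathfrak H}=\mathcal U$ almost surely is exactly the identification that closes this gap, and you are right to flag that this requires the Carleman condition on $\|X\|$, dimension $d=2$, $k\ge 2$, and randomly drawn $\mathfrak H$---none of which appears in the statement of the theorem, though all are needed; you also correctly split off the spherically symmetric case, where Theorem~\ref{theorem: dimension 2} does not apply but the identification is trivial. One small remark on calibration: the paper's own choice $\delta_n=\log n$ already gives $a_n/\sqrt n=\epsilon_n$, so the proposition's $o(1)$ factor yields $\|\widehat g_n-g\|_\infty=o(\epsilon_n)$; your $\delta_n=\log\log n$ is more conservative than necessary, though it works just as well.
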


    The almost sure convergence in Theorem~\ref{teo:consistency} is again considered with respect to both the random sample $\aleph_n$ and the set of directions $\mathfrak{H}$.

	\begin{proof}
		
		Let us assume that all the random variables involved are defined on the same probability space $(\Omega,\mathcal A,\p)$. In this proof, given $\omega \in \Omega$, the superindex $\omega$ denotes the dependence of the related object on the selected $\omega$. 
		
		From \Cref{prop:uniform_convergence} applied with $\delta_n = \log(n)$, there exists $\Omega_0\in \mathcal A$, such that, $\p(\Omega_0)=1$ and, for every $\omega \in \Omega_0$, it happens that 
		\begin{equation*}
			\beta_n\|\widehat g_n^{\,\omega} - g\|_\infty \to 0, \quad \text{as } n \to \infty,
		\end{equation*}
		(notice that $g$ and $\beta_n$ have no superindex because they are not random).
		
		Let us fix an element $\omega\in \Omega_0$. Then,  there exists $n_0^\omega \in  \nat$ such that for all $n > n_0^\omega$, we have 
		$$
		0\leq \widehat g_n^{\,\omega} (u)<g(u)+ \frac 1{\beta_n} \quad \mbox{for all } u  \in \Sd.
		$$
		Therefore, if  $u \in  \mathcal{U}$,  for all $n > n_0^\omega$, we have that 
		$\widehat g_n^{\,\omega}(u) < 1/\beta_n$, and consequently $\mathcal{U}\subset \mathcal{U}_n^\omega(\epsilon_n)$  for all $n > n_0^\omega$. 
		
		Additionally, if $u\in  \mathcal{U}_n^\omega(\epsilon_n)$, and $n\geq n_0^\omega$,  then $u \in g^{-1}([0, 2/\beta_n))$ so $\mathcal{U}_n^\omega(\epsilon_n)\subset g^{-1}([0, 2/\beta_n))$.
		Therefore, if we prove that $d_H(\mathcal{U},g^{-1}([0,2/\beta_n))) \to 0$, the claim will follow; notice that there is no superindex here because none of the involved terms is random.
		
		Proceeding by contradiction, let us assume that there exists $\kappa>0$ and a sequence $\{u_n\}_{n=1}^\infty \subset \Sd$ with $u_n \in g^{-1}([0,2/\beta_n))$ and  $d_E(u_n,\mathcal{U})>\kappa$ for every $n \in \nat$. Then, there exists a subsequence $u_{n_\ell} \to u_0 \in  \Sd$ as $\ell \to \infty$. Because $g(u_n) \leq 2/\beta_n \to 0$ as $n \to \infty$, we obtain by Lemma~\ref{lemma: continuity} that $g(u_0) = \lim_{\ell \to \infty} g(u_{n_\ell}) = 0$, meaning that $u_0 \in \mathcal{U}$. That contradicts the fact that $d_E(u_0,\mathcal{U})\geq \kappa$.	
		%
	\end{proof}

		\section{Simulation study}

    To demonstrate the performance of our estimator, we conducted a simulation study in which we generated datasets with known axes of symmetry and evaluated whether our method could accurately recover them. We considered two situations in the plane, named \textbf{Scenario 1} and \textbf{Scenario 2}, and for each of them we studied the performance with varying $n$ (the sample size) and $k$ (the number of random directions considered).\\
	
	\noindent \textbf{Scenario 1.}
	A bivariate normal distribution with unit variances and a correlation coefficient of $\rho = 0.7$.  A sample drawn from this distribution of size $n=1000$ is depicted in \Cref{fig: Gauss}.\\
	
	\noindent \textbf{Scenario 2.}
	A uniform distribution on the square $[-1, 1]^2$. A sample  consisting of $n=10000$ data points is depicted in \Cref{fig: square}.\\
	
	In these figures, the true axes of symmetry are represented by solid gray lines. Additionally, a black line illustrates the empirical estimate of the function $g$, presented in a polar coordinate system. This choice of representation enhances the understanding of the radial relationships within the data.
	
	For both \textbf{Scenarios 1} and \textbf{2}, we independently simulated 500 datasets and ran our algorithm on each one. 
    
    To quantify the error, we need to find the angles at which $\widehat{g}_n$ reaches its local minima.
    To tackle this optimization problem, we create an equally spaced grid $\{u_j\}_{j=1}^{200} \subset \Sd[1] $ and approximate $${\arg\min}_{u\in\Sd[1]} \; \widehat g_n(u)$$ by $$\arg\min_{u \in \{u_1,\dots,u_{200}\}} \; \widehat g_n(u).$$
    The random directions $h_1, \dots, h_k \in \Sd[1]$ are drawn as independent random vectors with uniform distribution on $\Sd[1]$.

    One complication here is that $\widehat g_n$ is a noisy estimate of the function $g$, so it will have multiple spurious local minima. Identifying the number of true (non-spurious) local minima, and their locations, leads to an interesting and challenging task. To address this issue we used the AMPD algorithm presented in \cite{a5040588}.\footnote{In particular, our procedure is designed specifically for the two-dimensional setting. If one is interested in working in higher dimensions, a specific optimization algorithm is required, as it involves optimizing a noisy, non-convex, and discretized function defined on the sphere. This makes the estimation in dimension $d > 2$ substantially more challenging and is outside the scope of our present simulation study.}

    In \Cref{tab:sub1,tab:sub2,tab:sub3,tab:sub4,tab:sub5} we show the frequency of the number of local minima detected by the AMPD algorithm for \textbf{Scenario 2} for each sample size. We considered $n=200, 500, 1000, 2000$ and  $5000$, and the number of random projections was set at $k=200$ at first; later, we will take into account the choice of this parameter. 
    For \textbf{Scenario 1}, our algorithm correctly detected the two local minima in 100\% of the replications for all sample sizes greater than 200. For $n=200$, it detected an incorrect number of local minima in only 2 out of 500 replications.
     
    \begin{table}[ht]
\centering
\label{tab:todas_frecuencias}

\begin{subtable}{0.46\textwidth}
\centering
\caption{Sample size $n=200$}
\label{tab:sub1}
\begin{tabular}{rrrrrr}
  \toprule
1 & 2 & 3 & 4 & 5 & 5+ \\ 
  \midrule
0.15 & 0.48 & 0.15 & 0.18 & 0.02 & 0.02 \\ 
   \bottomrule
\end{tabular}

\end{subtable}
\hfill
\begin{subtable}{0.46\textwidth}
\centering
\caption{Sample size $n=500$}
\label{tab:sub2}
\begin{tabular}{rrrrrr}
  \toprule
1 & 2 & 3 & 4 & 5 & 5+ \\ 
  \midrule
0.11 & 0.23 & 0.13 & 0.50 & 0.01 & 0.01 \\ 
   \bottomrule
\end{tabular}

\end{subtable}
\vspace{0.5cm}

\begin{subtable}{0.46\textwidth}
\centering
\caption{Sample size $n=1000$}
\label{tab:sub3}
\begin{tabular}{rrrrrr}
  \toprule
1 & 2 & 3 & 4 & 5 & 5+ \\ 
  \midrule
0.03 & 0.14 & 0.03 & 0.79 & 0.00 & 0.00 \\ 
   \bottomrule
\end{tabular}

\end{subtable}
\hfill
\begin{subtable}{0.46\textwidth}
\centering
\caption{Sample size $n=2000$}
\label{tab:sub4}
\begin{tabular}{rrrrrr}
  \toprule
1 & 2 & 3 & 4 & 5 & 5+ \\ 
  \midrule
0.00 & 0.02 & 0.00 & 0.97 & 0.00 & 0.00 \\ 
   \bottomrule
\end{tabular}

\end{subtable}
\vspace{0.5cm}

\begin{subtable}{0.46\textwidth}
\centering
\caption{Sample size $n=5000$}
\label{tab:sub5}
\begin{tabular}{rrrrrr}
  \toprule
1 & 2 & 3 & 4 & 5 & 5+ \\ 
  \midrule
0.00 & 0.00 & 0.00 & 1.00 & 0.00 & 0.00 \\ 
   \bottomrule
\end{tabular}

\end{subtable}\caption{Frequency of number of local minimas detected for \textbf{Scenario 2}.}\end{table}

    Once we have the local minima of $\widehat g_n$, and restricting to the cases where the estimated number of minima is correct, we matched each estimated axis to its closest true axis. We then computed the average angular distance between these matched pairs. 
	%
	\Cref{fig:errors}  show these errors for \textbf{Scenarios~1} and~\textbf{2}, and  $k=200$.

	\begin{figure}%
		\begin{center}
			\begin{subfigure}{.49\textwidth}
				\includegraphics[width =\textwidth]{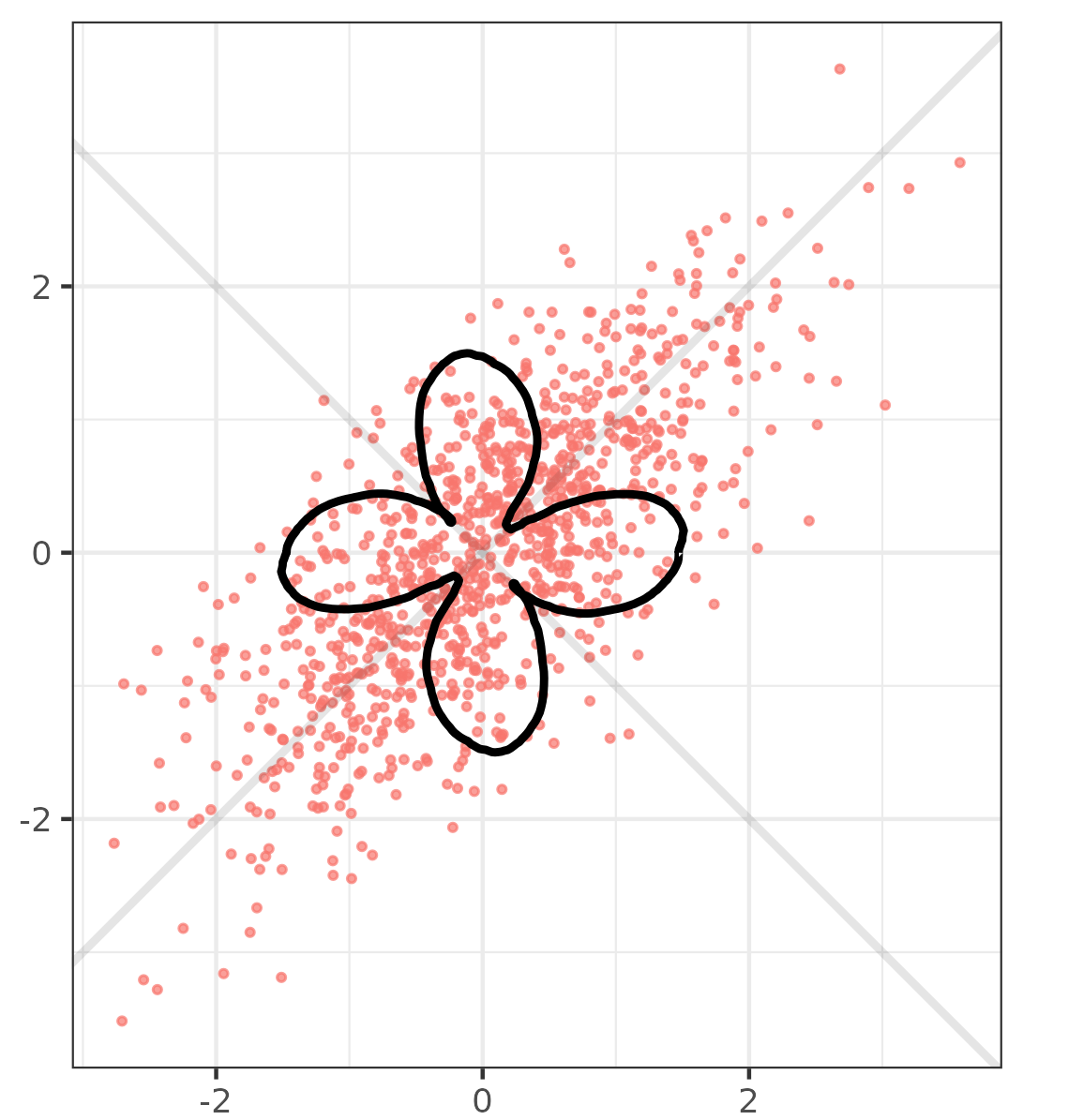}
				\subcaption{} \label{fig: Gauss}
			\end{subfigure}
			\begin{subfigure}{.49\textwidth}
				\includegraphics[width = \textwidth]{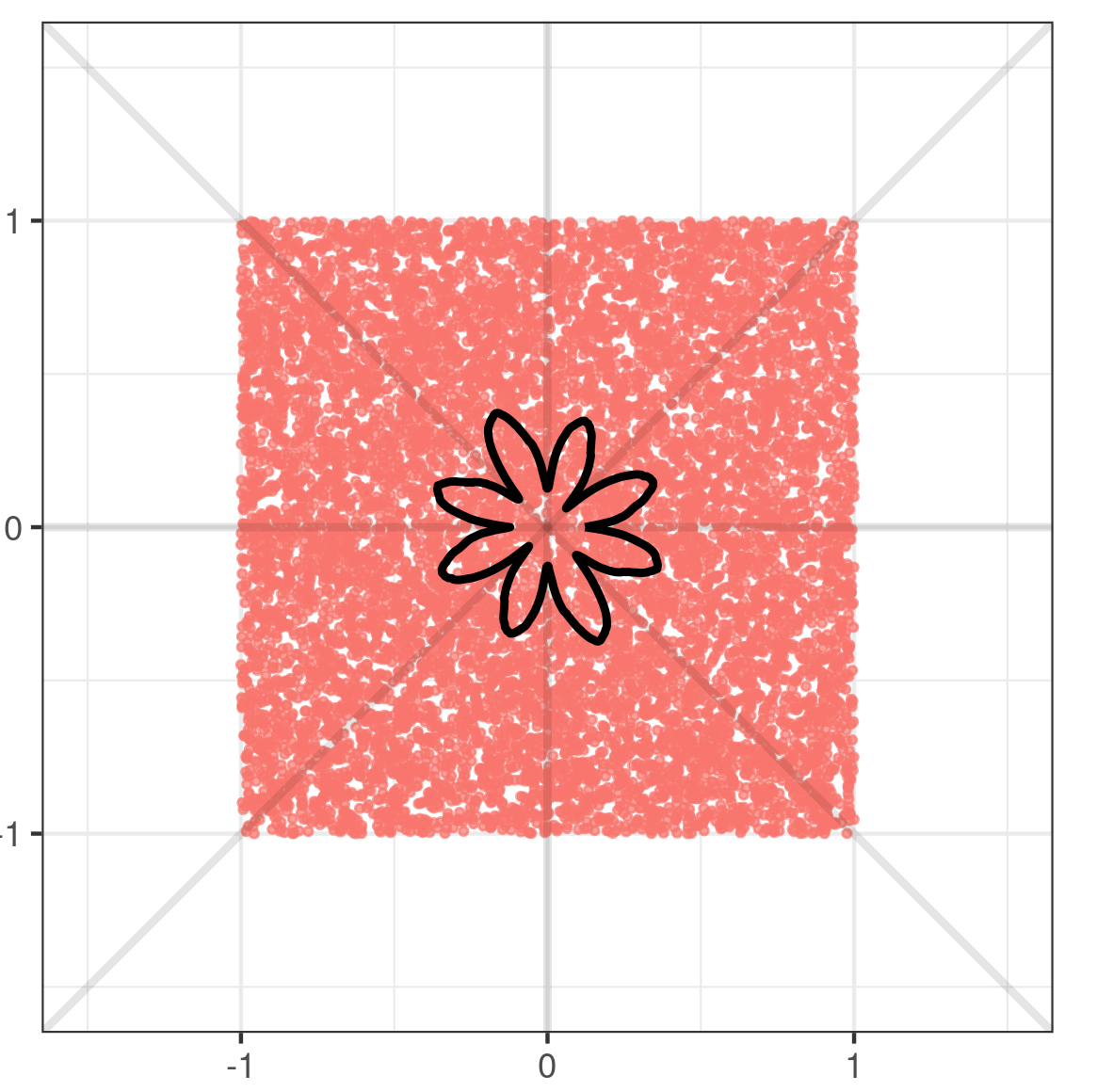} 
				\subcaption{} \label{fig: square}
			\end{subfigure}
			\caption{
				In each subfigure, simulated data are depicted as red points; the black line illustrates the estimate of the function $g$ in a polar plot format, and the gray lines indicate the true axes of symmetry. Subfigure (a) represents a bivariate normal distribution (\textbf{Scenario~1}), while subfigure (b) represents a uniform distribution within a square (\textbf{Scenario~2}).
			}
		\end{center}
	\end{figure} 

    \begin{figure}
        \centering
        \includegraphics[width = \linewidth]{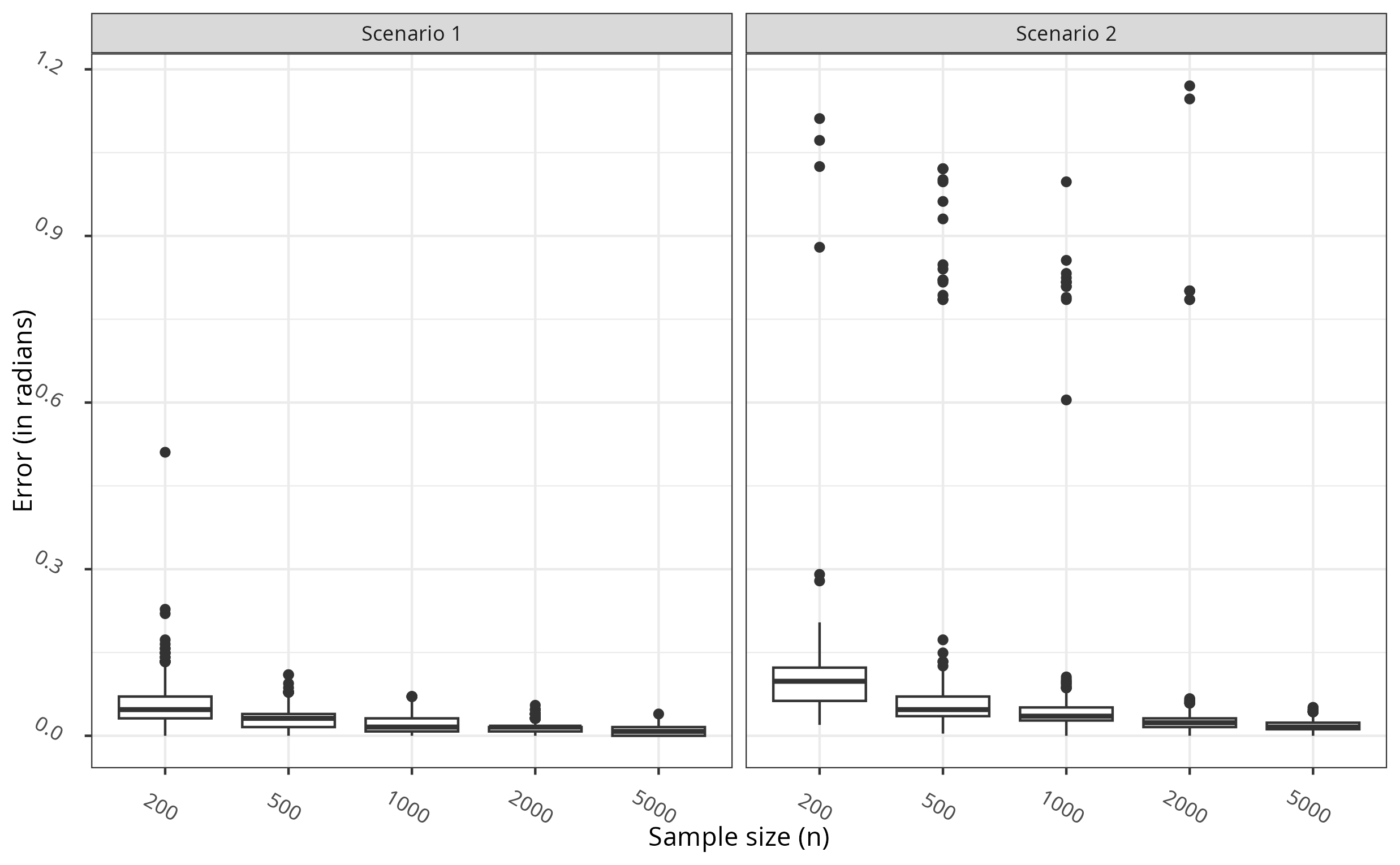}
        \caption{Average angular error for different sample sizes for \textbf{Scenario 1} and \textbf{2}, for $k=200$ projections.}
        \label{fig:errors}
    \end{figure}
	
	In a second numerical experiment, we vary the number of random projections, exploring \(k = 2, 10, 50, 100, 150,\) and \(200\). 
    The frequency with which the algorithm correctly identifies the number of minima is presented in \Cref{tab:minima_gauss_k_n,tab:minima_unif_k_n} for both scenarios.  
    Restricting again to replications where the true number of local minima is detected, the corresponding estimation error is shown in \Cref{fig1:error_k} and \Cref{fig2:error_k}.

\begin{table}[ht]
\centering
\begin{tabular}{lrrrrrr}
  \hline
 & $k = 2$ & $k = 10$ & $k = 50$ & $k = 100$ & $k = 150$ & $k = 200$ \\ 
  \hline
$n = 200$ & 0.864 & 0.960 & 0.996 & 0.998 & 0.998 & 0.996 \\ 
  $n = 500$ & 0.900 & 0.992 & 1.000 & 1.000 & 1.000 & 1.000 \\ 
  $n = 1000$ & 0.892 & 0.986 & 1.000 & 1.000 & 1.000 & 1.000 \\ 
  $n = 2000$ & 0.888 & 0.992 & 1.000 & 1.000 & 1.000 & 1.000 \\ 
  $n = 5000$ & 0.896 & 0.992 & 1.000 & 1.000 & 1.000 & 1.000 \\ 
   \hline
\end{tabular}
\caption{Frequency of correct number of minima detected, for \textbf{Scenario 1}.} 
\label{tab:minima_gauss_k_n}
\end{table}

\begin{table}[ht]
\centering
\begin{tabular}{lrrrrrr}
  \hline
 & $k = 2$ & $k = 10$ & $k = 50$ & $k = 100$ & $k = 150$ & $k = 200$ \\ 
  \hline
$n = 200$ & 0.140 & 0.146 & 0.176 & 0.160 & 0.182 & 0.176 \\ 
  $n = 500$ & 0.318 & 0.406 & 0.472 & 0.482 & 0.486 & 0.498 \\ 
  $n = 1000$ & 0.456 & 0.646 & 0.764 & 0.786 & 0.788 & 0.790 \\ 
  $n = 2000$ & 0.656 & 0.878 & 0.966 & 0.968 & 0.964 & 0.970 \\ 
  $n = 5000$ & 0.872 & 0.988 & 0.998 & 0.998 & 1.000 & 1.000 \\ 
   \hline
\end{tabular}
\caption{Frequency of correct number of minima detected, for \textbf{Scenario 2}.} 
\label{tab:minima_unif_k_n}
\end{table}

    From an empirical point of view, we see that our estimation method is consistent. This confirms our theoretical analysis from Section~\ref{sec: theory}. Further, there appears to be little gain in choosing $k > 50$. The estimation algorithm performs well for a sufficiently large sample size, which is expected given its completely nonparametric framework that typically requires larger amounts of data. The results also show that the method works better for the Gaussian data (\textbf{Scenario 1}) than for the uniform data (\textbf{Scenario 2}). 

    \begin{figure}[H]%
		\begin{center}
 				\includegraphics[width = \textwidth]{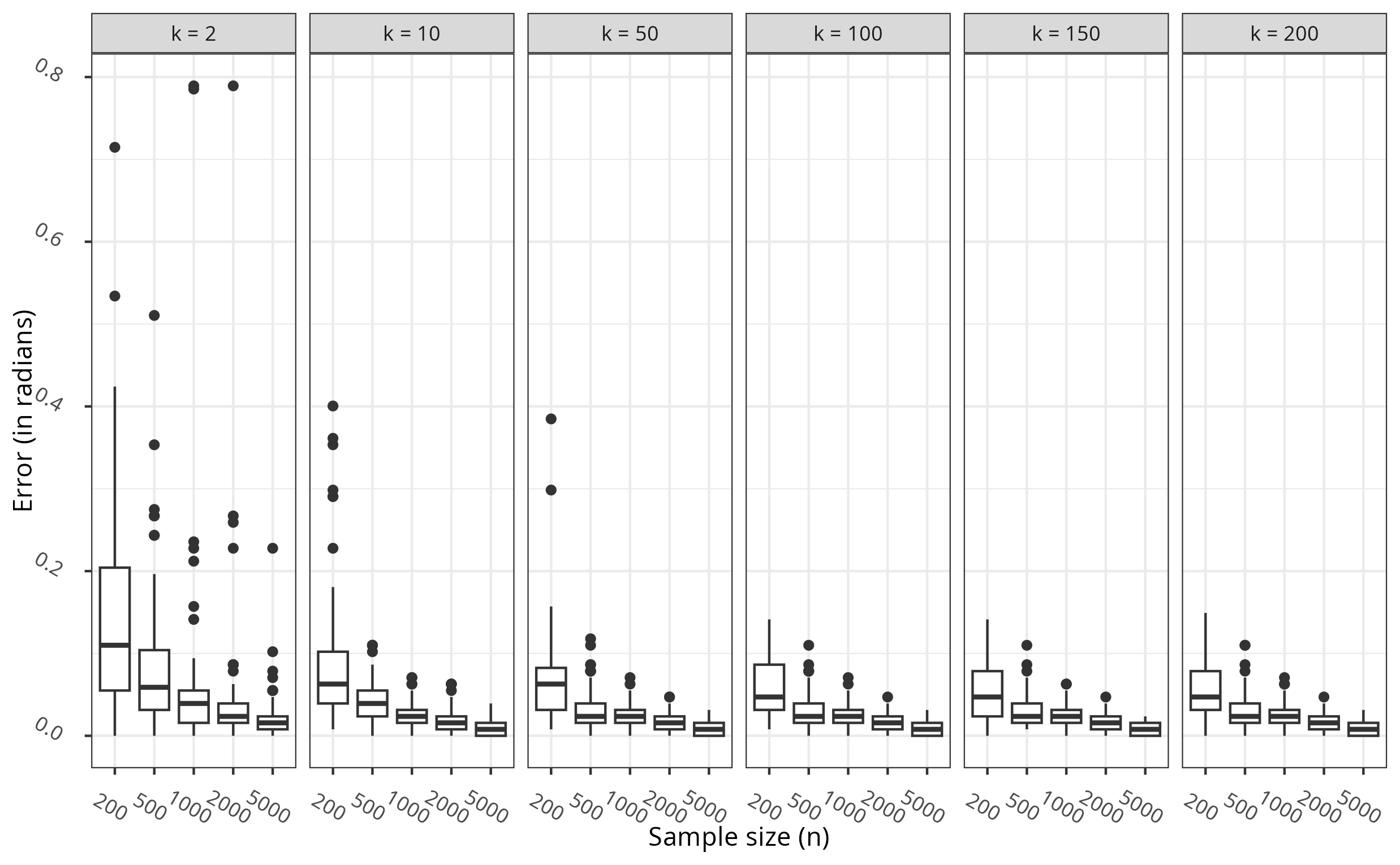}
			\caption{Average angular error when considering different sample sizes, and different values of $k$, for \textbf{Scenario 1}.}
            \label{fig1:error_k}
		\end{center}
	\end{figure}

    \begin{figure}[H]%
		\begin{center}
 				\includegraphics[width = \textwidth]{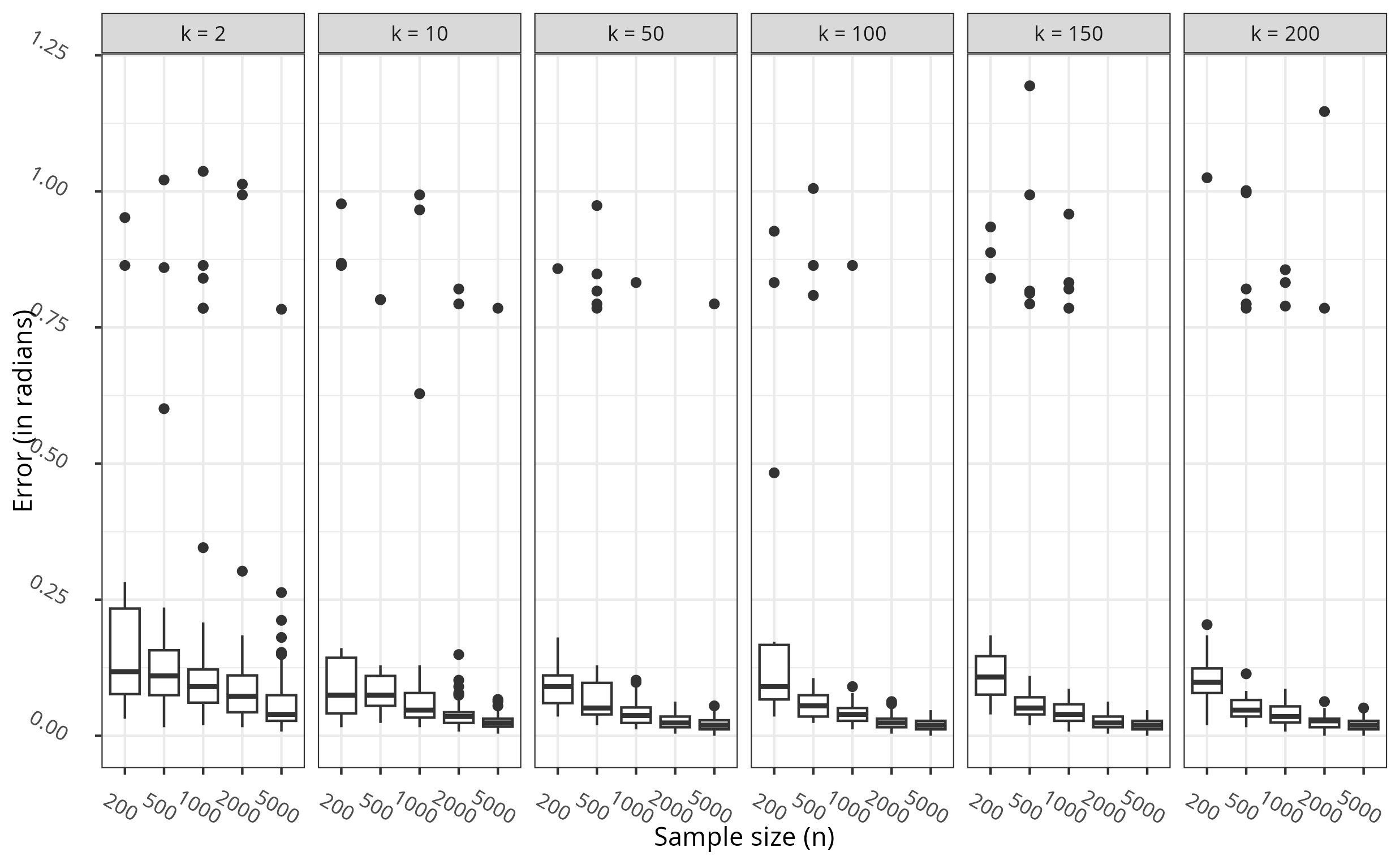} 	
 			\caption{Average angular error when considering different sample sizes, and different values of $k$, for \textbf{Scenario 2}.}
            \label{fig2:error_k}
		\end{center}
	\end{figure}

		\section{Application to real data: analysis of medical images}
		
	We conclude by presenting a methodological application of our estimation technique to the field of medical image analysis.
	
	In \cite{hogeweg2017fast}, a method is presented for measuring symmetry in medical images, which offers a feature that improves disease detection. The first step involves determining an axis of symmetry. Subsequently, each point inside a shape on one side of the image (e.g., a point on the right lung) is compared to its symmetric counterpart in the shape on the other side of the image (i.e., the left lung). This comparison process results in a local symmetry measure. By averaging these local symmetry measures over the entire image, a global symmetry measure is obtained. This quantification helps in analyzing and detecting potential asymmetries or deviations from perfect symmetry in the medical images, which can be indicative of pathological changes or abnormalities. The combination of local and global symmetry measures provides valuable information for disease detection and medical image analysis.

    \begin{figure}[htpb]%
		\begin{center}
			\begin{subfigure}{.49\textwidth}
				\includegraphics[width = \textwidth]{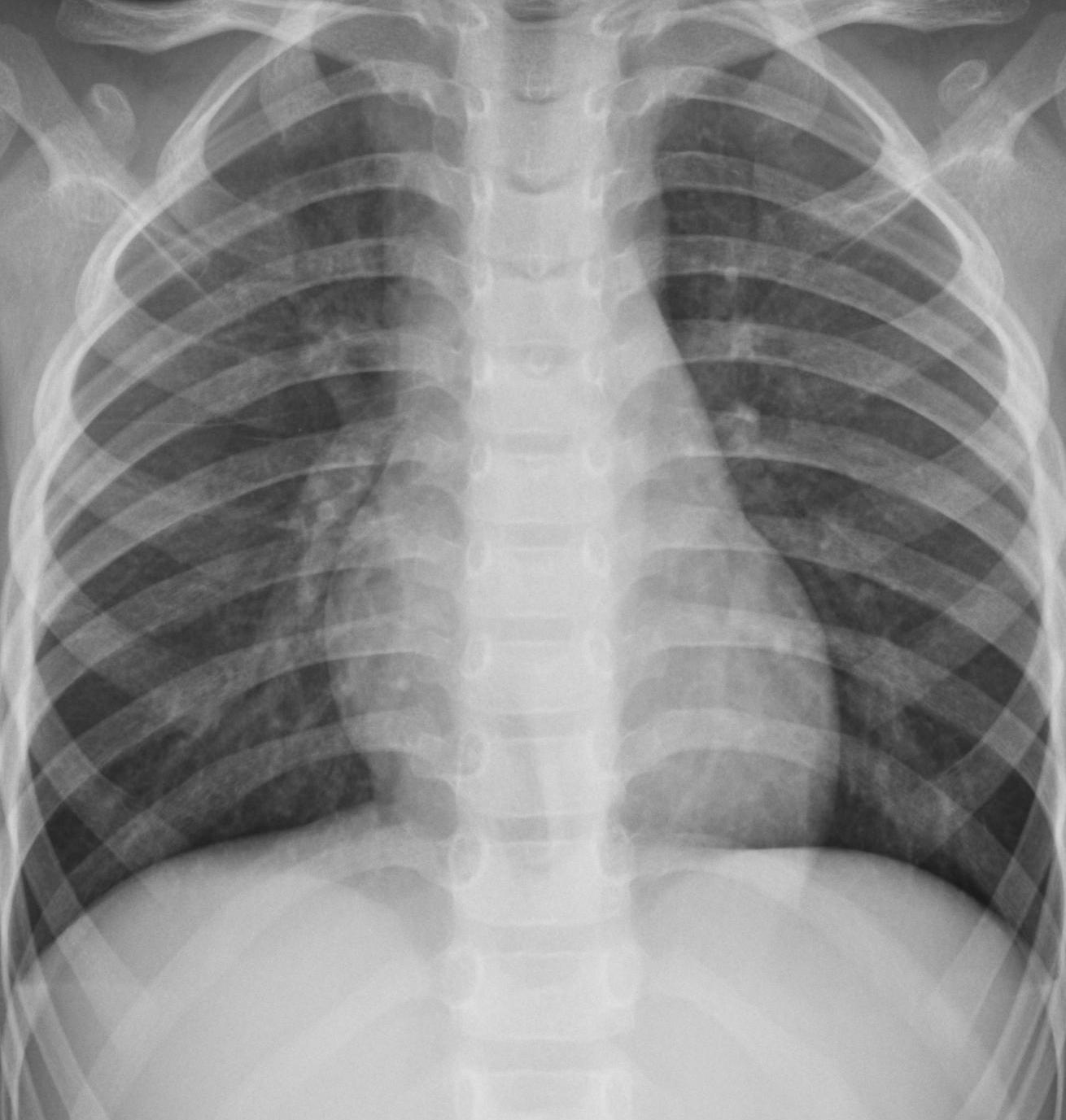}
				\subcaption{}\label{fig:radiografia}
			\end{subfigure}
			\begin{subfigure}{.49\textwidth}
				\includegraphics[width = \textwidth]{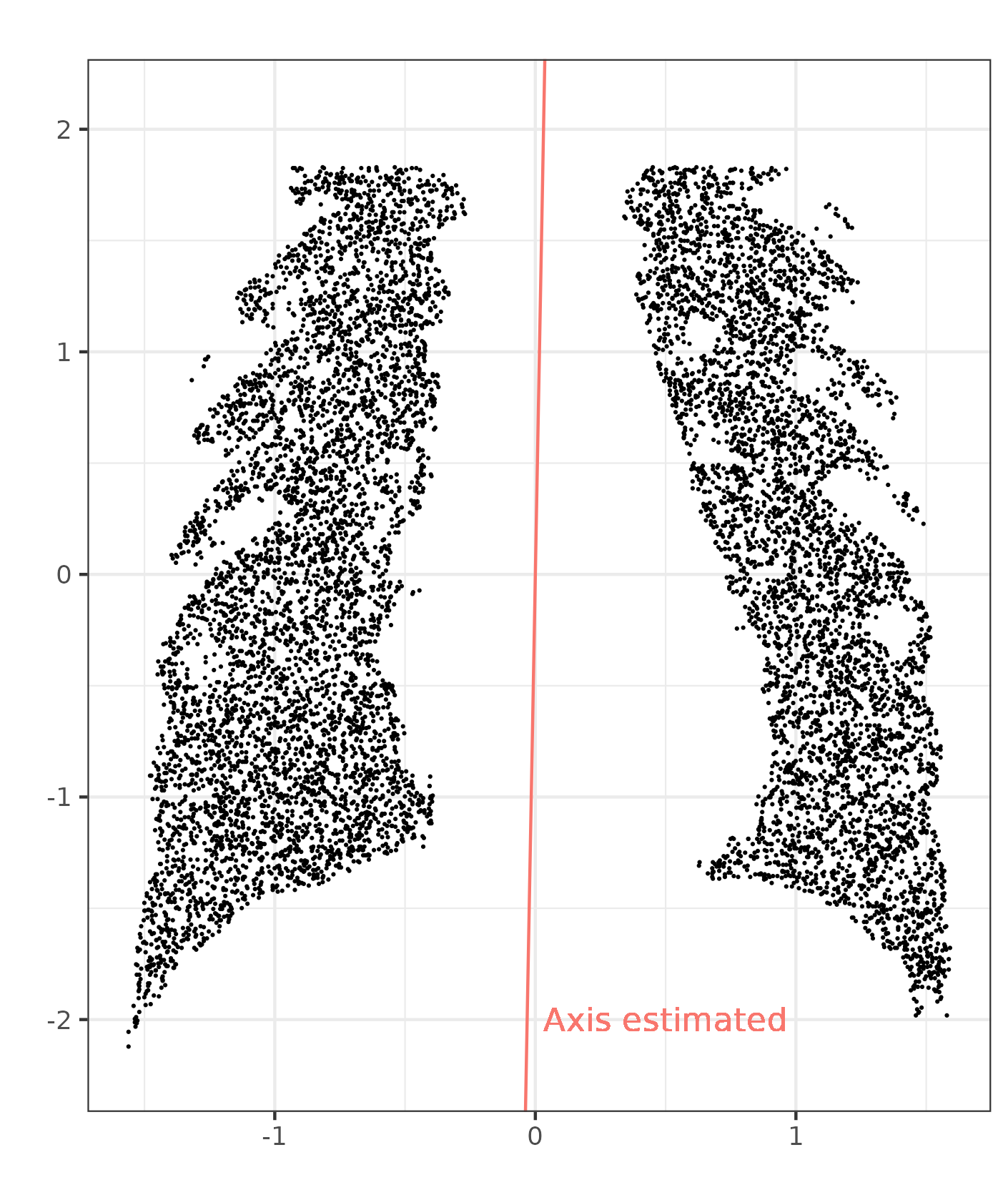} 	
				\subcaption{}\label{fig:muestra_pulmones}
			\end{subfigure}
			\caption{
            (a) A frontal chest radiograph. The region of interest (the lungs) is highlighted.
            (b) A set of independent random points sampled uniformly from within the lung shape. The red line indicates the axis of symmetry estimated from this point cloud.}
		\end{center}
	\end{figure} 
	
	One issue here is that, despite existing protocols on medical imaging, in practice, the axis of symmetry is not known with certainty and must be estimated. The authors suggest estimating it as the direction that optimizes the proposed symmetry measure. In this matter, we will use our estimator to identify the axis of symmetry in a different way, which allows us to correctly align the images. By utilizing our estimator, we provide an alternative approach for estimating the axis of symmetry, contributing to improved medical image analysis and disease detection.

    \begin{figure}[htpb]
        \centering
        \includegraphics[width = \linewidth]{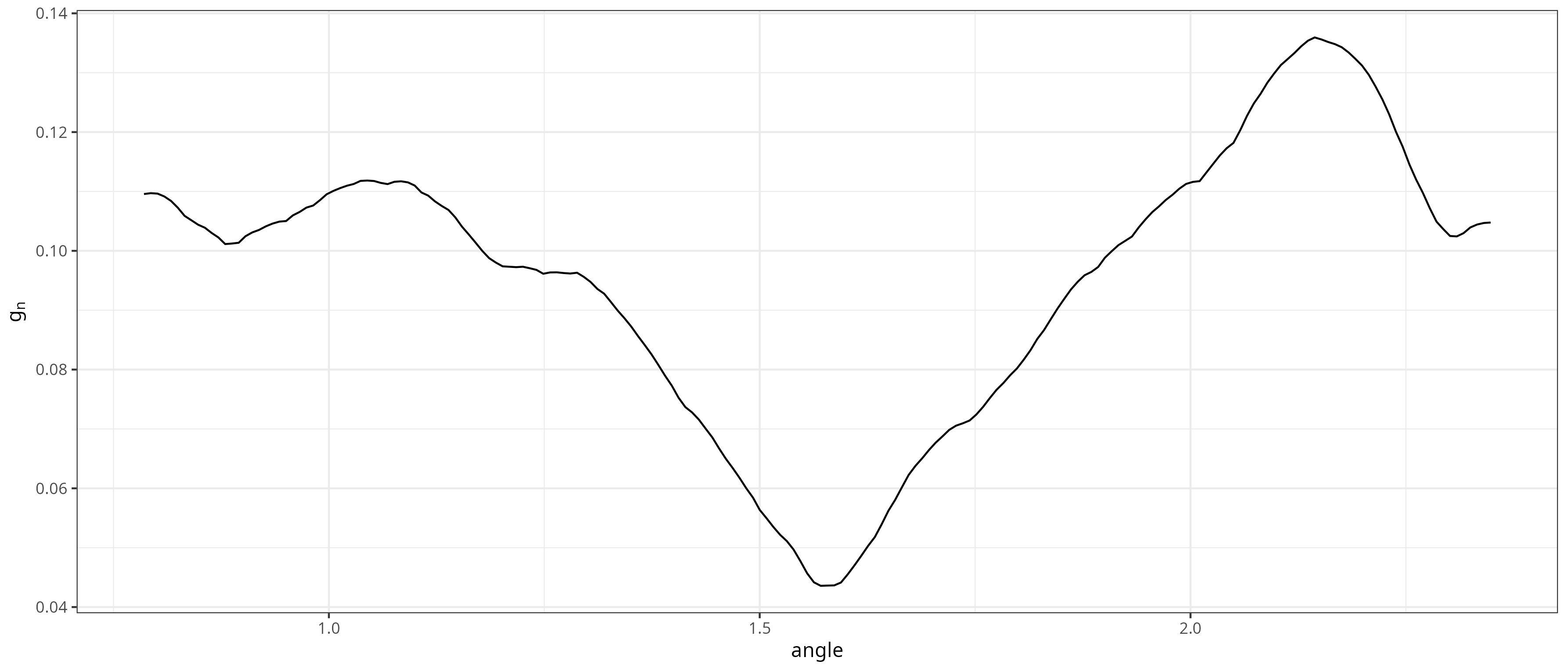}
        \caption{Plot of the estimating function $\widehat g_n$ for the lungs data.}
        \label{fig:gn_lungs}
    \end{figure}
	
	A way to compute our estimator is to draw a sample of random points inside the shapes that we are interested in analyzing. Suppose we have a grayscale image like the one shown in \Cref{fig:radiografia}, and we are interested in analyzing anomalies in the lungs. We can select pixels with a normalized intensity below a chosen threshold (here, $0.45$, where 0 represents pure black and 1 pure white). We then draw a subsample of $n = 10000$ points from these pixel locations with some added jitter, as shown in \Cref{fig:muestra_pulmones}.
	
	By using this sampling procedure, we obtain a sample $\aleph_n = \left\{ X_1, \dots, X_n \right\}$ from an unknown distribution $F_X$ of $X \in \RR^2$ in the plane. If we assume that there exists a vector $u \in \Sd[1]$ that makes $X \eqd R_uX$, we would be able to estimate it consistently. It is important to note that this sampling procedure is intended solely for estimating the axis of symmetry, not for measuring the symmetry features that would require more information.
    
    In \Cref{fig:gn_lungs}, the estimator $\widehat g_n$ is plotted. The figure clearly shows a unique global minimum in the direction of the angle close to $\pi/2$, where the column is expected to be. The estimated axis of symmetry is displayed as the red line in Figure~\ref{fig:muestra_pulmones}.

\subsection*{Acknowledgments} S. Nagy was supported by the Czech Science Foundation (project n. 24-10822S) and the ERC CZ grant LL2407 of the Ministry of Education, Youth and Sport of the Czech Republic.
    
	\bibliographystyle{apalike}
	\bibliography{biblio}

\end{document}